\theoremstyle{plain} 
\newtheorem{theorem}{Theorem}[section]
\newtheorem{cor}[theorem]{Corollary}
\newtheorem{lemma}[theorem]{Lemma}
\newtheorem{prop}[theorem]{Proposition}
\theoremstyle{definition}
\newtheorem{definition}[theorem]{Definition}
\theoremstyle{remark}
\begin{document}

\title[On the $n$-transitivity of the group of equivariant diffeomorphisms]
{On the $n$-transitivity of the group of equivariant diffeomorphisms}

\author{Marja Kankaanrinta}

\address{Tampere University}
\email{marja.kankaanrinta@tuni.fi}

\date{\today}


\keywords{}

\begin{abstract}  
Let $G$ be a Lie group and let $M$ be a proper smooth $G$-manifold. 
If $M$ is connected and $\dim(M)\geq 2$,  the
group of diffeomorphisms of $M$, that are isotopic to the identity through
a compactly supported isotopy,
acts $n$-transitively on $M$, for any $n$.  In this paper, we prove a version of the
$n$-transitivity result for the group of equivariant diffeomorphisms of $M$. As a corollary we obtain a
result concerning  diffeomorphisms of the orbit space $M/G$.  A special case of the result for orbit spaces
gives an $n$-transitivity result for orbifold diffeomorphisms that was earlier proved by F. Pasquotto and T. O. Rot.
\end{abstract}

\maketitle

\section{Introduction}
\label{intro}

\noindent Let $M$ be a connected smooth manifold of dimension at least two.  A subgroup $H$ of the diffeomorphism
group ${\rm Diff}(M)$ is said to act $n$-{\it transitively} on $M$, if for any two $n$-tuples of pairwise distinct points 
$(x_1,\ldots, x_n)$ and $(y_1,\ldots, y_n)$ in $M$ there is an element $f\in H$ such that
$f(x_i)=y_i$, for all $i\in \{1,\ldots, n\}$.
In \cite{MV}, P. Michor and C. Vizman
studied the $n$-transitivity of actions of various subgroups of  ${\rm Diff}(M)$ on $M$. In
particular, they proved that the group ${\rm Diff}_c(M)$ of  diffeomorphisms of $M$, 
that are isotopic to the identity through a compactly supported isotopy,
acts $n$-transitively on $M$, for all $n\in{\mathbb{N}}$.

Assume a Lie group $G$ acts smoothly on $M$. It is obvious that in general the group ${\rm Diff}^G(M)$ of
equivariant diffeomorphisms of $M$ cannot act transitively on $M$. For example, if $f\colon M\to M$ is a $G$-equivariant
diffeomorphism, then the isotropy groups at the points $x$ and $f(x)$ must be the same for all $x\in M$. Also, the value 
of an equivariant map at $gx$, for any $g\in G$, is already determined by the value of the map at $x$. Taking into account
these restrictions we obtain an equivariant version of the $n$-transitivity result:

\begin{theorem}
\label{result1}
Let $G$ be a Lie group and let $M$ be a proper smooth $G$-manifold  with orbit space $M/G$. 
Let
$n\in{\mathbb{N}}$, and let $(x_1,\ldots, x_n)$ and $(y_1,\ldots, y_n)$ be two $n$-tuples of points in
$M$. Assume the following conditions are satisfied:
\begin{enumerate}
\item $(Gx_1,\ldots, Gx_n)$ and $(Gy_1,\ldots, Gy_n)$ are two $n$-tuples of pairwise distinct points in the orbit space $M/G$.
\item For every $i$, $1\leq i\leq n$, $G_{x_i}=G_{y_i}$ and $x_i$ and $y_i$ lie in the same connected component of 
$M_{G_{x_i}}=\{ x\in M\mid G_x=G_{x_i}\}$.
\item If $H$ is a compact subgroup of $G$, then any connected component $Y$ of $M_H$
satisfying $\dim\bigl( N(H)/H\bigr)=\dim(Y)-1$
contains at most one of the points $x_i$.
\end{enumerate}
Then there exists a smooth $G$-equivariant diffeomorphism $f\colon M\to M$
isotopic to the identity through a $G$-compactly supported isotopy, such that $f(x_i)=y_i$ for all $1\leq i\leq n$.
\end{theorem}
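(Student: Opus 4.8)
The plan is to realize the desired diffeomorphism as the time-one map of a $G$-equivariant, $G$-compactly supported ambient isotopy, obtained by extending a carefully chosen isotopy of the finite family of orbits $Gx_1,\dots,Gx_n$. Concretely, I would first produce smooth paths $c_i\colon[0,1]\to M$ with $c_i(0)=x_i$, $c_i(1)=y_i$ that preserve the isotropy type (so $G_{c_i(t)}=G_{x_i}$ for all $t$, keeping each $c_i$ inside the component of $M_{G_{x_i}}$ provided by condition (2)) and keep the orbits pairwise disjoint at every time, and then extend the resulting isotopy of orbits to all of $M$. The second stage is standard machinery; the construction of the paths is the heart of the matter.

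For the path construction I would use the structure of proper actions. By the slice theorem each isotropy group $H=G_{x_i}$ is compact, $N(H)/H$ acts freely and properly on $M_H=\{x:G_x=H\}$, and the orbit-type piece $\pi(M_H)=M_H/(N(H)/H)$ of the orbit space is a smooth manifold of dimension $\dim(Y)-\dim(N(H)/H)$ on a component $Y$ of $M_H$, where $\pi\colon M\to M/G$ is the projection. The key observation is that two of the points can occupy a common orbit only when their isotropy groups are conjugate, and then the collision is visible entirely inside one manifold $\pi(M_H)$. So fix $i\neq j$ with $G_{x_i}$, $G_{x_j}$ conjugate to $H$; the locus where $Gc_i(t)=Gc_j(t)$ is the preimage under $\pi\times\pi$ of the diagonal of $\pi(M_H)$, hence (since $\pi$ is a submersion on $M_H$) a submanifold of codimension $\dim\pi(M_H)$. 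If $\dim\pi(M_H)\geq 2$, the endpoints $(x_i,x_j)$ and $(y_i,y_j)$ avoid this locus by condition (1), so a general-position perturbation of the tuple of paths (equivalently, the connectivity of the ordered configuration space of a connected manifold of dimension at least two) yields collision-free paths with the prescribed endpoints, rel endpoints. The remaining cases are exactly $\dim\pi(M_H)\leq 1$: when $\dim\pi(M_H)=0$ the component $Y$ is a single orbit and condition (1) leaves at most one of the $x_i$ there, while $\dim\pi(M_H)=1$ is precisely the equality $\dim(N(H)/H)=\dim(Y)-1$ of condition (3), which again leaves at most one of the points on that one-dimensional stratum. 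In either low-dimensional case there is no pair to separate, so the paths exist trivially.

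The paths then assemble into a map $\Phi_t\colon\bigsqcup_i Gx_i\to M$, $\Phi_t(g\,x_i)=g\,c_i(t)$. This is well defined and $G$-equivariant precisely because $G_{c_i(t)}=G_{x_i}$, and since the orbits stay pairwise disjoint it is, for each $t$, a $G$-equivariant embedding of the $G$-compact submanifold $\Sigma_0=\bigsqcup_i Gx_i$, with $\Phi_0$ the inclusion. Applying the equivariant isotopy extension theorem for proper $G$-manifolds (available here because $\Sigma_0$ is $G$-compact) extends $\{\Phi_t\}$ to a $G$-equivariant, $G$-compactly supported ambient isotopy $F_t\colon M\to M$ with $F_0={\rm id}$. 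Then $f=F_1$ is $G$-equivariant, isotopic to the identity through a $G$-compactly supported isotopy, and satisfies $f(x_i)=\Phi_1(x_i)=c_i(1)=y_i$ for every $i$.

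I expect the main obstacle to be the first stage, namely producing collision-free, isotropy-preserving paths with fixed endpoints. The delicate point is reconciling the bookkeeping of connected components of $M_H$ with those of $\pi(M_H)$: one must check that conditions (1), (2) and (3) together eliminate every collision that a general-position argument in codimension at least two cannot, especially when $N(H)/H$ is disconnected and could relate distinct components of $M_H$ sitting over a single low-dimensional stratum of the orbit space. Everything downstream — the manifold structure of the orbit-type strata and the equivariant isotopy extension — is routine for proper actions.
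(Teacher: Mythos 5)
Your core computation coincides with the paper's: the locus in $Y\times Y$ where two points of a component $Y$ of $M_H$ share a $G$-orbit is the trace of $\phi\bigl(N(H)\times Y\bigr)$, of codimension $\dim(Y)-\dim\bigl(N(H)/H\bigr)$, so general position disposes of a colliding pair exactly when $\dim\bigl(N(H)/H\bigr)\leq\dim(Y)-2$, and Conditions $(1)$--$(3)$ are there to eliminate the remaining cases. Where you genuinely differ is in the assembly. The paper does not build paths and extend them globally; it shows that ${\rm Diff}^G_c(M)$ acts with open orbits on the stratum $M^{(n)}\cap(Y_1^{n_1}\times\cdots\times Y_k^{n_k})$ of the $G$-configuration space (via Proposition \ref{apulause}, i.e.\ local transitivity inside a slice, where Theorem \ref{uusi.isot.jatko} is invoked only to extend an isotopy from $G{\rm N}_H$ to $G{\rm N}$), and then concludes from connectedness of that stratum that there is a single orbit. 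Your route --- lift a path in the configuration space to an isotopy of the closed $G$-invariant submanifold $\bigsqcup_i Gx_i$ and apply the equivariant isotopy extension theorem once, globally --- is also viable and somewhat more geometric; the hypotheses of Theorem \ref{uusi.isot.jatko} (closedness of a finite union of orbits of a proper action, $G$-compactness of its support) are indeed met. The two assemblies cost about the same; the paper's avoids having to verify that your family $\Phi_t$ is an isotopy of embeddings of a $\Sigma$-manifold of homogeneous spaces.

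The delicate point you flag at the end is the actual crux, and you cannot leave it as an expectation. Condition $(3)$ counts marked points per connected component $Y$ of $M_H$, whereas your collision analysis needs a count per connected component of $\pi(M_H)$, i.e.\ per $N(H)$-orbit of components of $M_H$ (these agree only when $N(H)Y=N(H)_0Y$). If $g\in N(H)\setminus N(H)_0$ carries $Y_i$ to a different component $Y_j=gY_i$ lying over the same one-dimensional stratum of $M/G$, then $x_i\in Y_i$ and $x_j\in Y_j$ can still collide in codimension one even though each component carries only one marked point, and no perturbation rel endpoints removes the collision. The paper handles this by first normalizing the tuple: by equivariance $f(gx_j)=gy_j$ is equivalent to $f(x_j)=y_j$, so one replaces $(x_j,y_j)$ by $(g^{-1}x_j,g^{-1}y_j)$ until $Y_i\neq gY_j$ for all $i\neq j$ and all $g\in G\setminus N(H)_0$, and only then runs the counting hypothesis and the codimension argument on the translated points. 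You need the same normalization before constructing your paths, and you must check that the hypothesis you invoke in the one-dimensional case is the one that survives it --- at most one marked point per $N(H)$-orbit of components, which is how the count is actually taken (on $\pi(Y)$) when Condition $(3)$ is verified in the proof of Theorem \ref{result2}.
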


In \cite{PR}, F. Pasquotto and T. O. Rot proved a version of the $n$-transitivity result for orbifold diffeomorphisms. 
As in the equivariant case, one has to take into account the local groups at points of the orbifold. Since every smooth
 reduced, i.e., effective, orbifold can be considered as 
 a quotient of  a smooth, effective, almost free  action of a compact Lie group on a smooth manifold, 
 it follows that the $n$-transitivity results in the equivariant case and in the orbifold case are closely related. 
 Indeed,
 by considering diffeomorphisms of  orbit spaces, we  obtain an $n$-transitivity result for diffeomorphisms of the
 orbit space $M/G$, see Corollary \ref{cor1}. As a special case, we then obtain the $n$-transitivity result of Pasquotto and Rot for
 orbifold diffeomorphisms:

\begin{theorem}
\label{result2}
Let $X$ be a reduced smooth orbifold. Let
$n\in{\mathbb{N}}$, and let $(x_1,\ldots, x_n)$ and $(y_1,\ldots, y_n)$ be two $n$-tuples of pairwise distinct points in
$X$. Assume that for every $i$, $1\leq i\leq n$, $x_i$ and $y_i$ lie in the same connected component of
the singular strata of $X$, and that any one-dimensional connected component of the strata of $X$
contains at most one of the points $x_i$. Then there exists a smooth orbifold diffeomorphism $f\colon X\to
X$ isotopic to the identity through a compactly supported isotopy, such that $f(x_i)=y_i$, for all
$1\leq i\leq n$.
\end{theorem}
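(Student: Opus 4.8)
The plan is to deduce Theorem~\ref{result2} from the equivariant Theorem~\ref{result1} by realizing the orbifold $X$ as an orbit space of a compact group action. First I would present $X$, say of dimension $d$, as the quotient $M/G$ where $M=\mathrm{Fr}(X)$ is the orthonormal frame bundle for an auxiliary Riemannian metric and $G=O(d)$; this is the standard fact that every reduced orbifold is a quotient orbifold. Here $M$ is a smooth manifold of dimension $d+\binom{d}{2}$, the $O(d)$-action is smooth, effective and almost free with finite isotropy groups isomorphic to the local groups of $X$, and it is proper since $O(d)$ is compact. I would reduce to the case $\dim X\ge 2$, so that $\dim M\ge 3$; the cases $\dim X\le 1$ are elementary, as then each component of $X$ is discrete or a one-dimensional orbifold and the hypotheses reduce matters to moving a single marked point inside a one-manifold.

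The key bookkeeping step is to match the stratification of $X$ with the orbit-type data of $M$. Writing $\pi\colon M\to X$ for the projection and $M_H=\{z\in M\mid G_z=H\}$ for a finite $H\le G$ as in Theorem~\ref{result1}, I would observe that $N(H)$ preserves $M_H$, that $H$ fixes it pointwise, and that $Gz\cap M_H=N(H)z\cong N(H)/H$ for $z\in M_H$. Consequently $\pi$ restricts on each component $Y$ of $M_H$ to a submersion onto a stratum $\Sigma$ of $X$ with fibres the $N(H)/H$-orbits, whence
\[
\dim\Sigma=\dim Y-\dim\bigl(N(H)/H\bigr).
\]
Thus $\Sigma$ is one-dimensional precisely when $\dim\bigl(N(H)/H\bigr)=\dim(Y)-1$, the exceptional case of condition~(3) in Theorem~\ref{result1}; and two points of $X$ lie in a common component of a stratum exactly when they lift to a common component of some $M_H$ with identical isotropy.

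With this dictionary in hand I would choose lifts. Fix $\tilde x_i\in\pi^{-1}(x_i)$, and, using that $x_i$ and $y_i$ lie in the same component of a common stratum, lift a path between them to the appropriate component of $M_{G_{\tilde x_i}}$ to obtain $\tilde y_i$ with $G_{\tilde y_i}=G_{\tilde x_i}$ and $\tilde x_i,\tilde y_i$ in the same component of $M_{G_{\tilde x_i}}$. Then condition~(2) of Theorem~\ref{result1} holds; condition~(1) holds because the $x_i$ and the $y_i$ are pairwise distinct in $X=M/G$; and condition~(3) follows from the hypothesis on one-dimensional strata via the dimension formula above. Theorem~\ref{result1} then provides a $G$-equivariant diffeomorphism $\tilde f\colon M\to M$, isotopic to the identity through a $G$-compactly supported isotopy $\tilde f_t$, with $\tilde f(\tilde x_i)=\tilde y_i$. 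Finally, since each $\tilde f_t$ is $O(d)$-equivariant and the orbifold structure on $X$ is that of $M/O(d)$, the induced maps $f_t$ on orbit spaces are orbifold diffeomorphisms forming a compactly supported orbifold isotopy from the identity to the desired $f$, which satisfies $f(x_i)=\pi(\tilde f(\tilde x_i))=y_i$; this descent is exactly the content of Corollary~\ref{cor1}.

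I expect the main obstacle to be the two points where the equivariant and orbifold languages are glued together: the dimension bookkeeping identifying the one-dimensional strata of $X$ with the exceptional orbit types of condition~(3), and the verification that the orbit-space map induced by a $G$-equivariant diffeomorphism is genuinely an orbifold diffeomorphism (with its isotopy remaining compactly supported), not merely a homeomorphism. A subtler point in the lifting step is arranging the lifts $\tilde x_i,\tilde y_i$ to have equal isotropy groups rather than merely conjugate ones, which is what makes condition~(2) of Theorem~\ref{result1} applicable.
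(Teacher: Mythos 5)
Your proposal follows essentially the same route as the paper: realize $X$ as $\mathrm{Fr}(X)/\mathrm{O}(d)$, lift the marked points to components of the sets $M_H$ with equal isotropy, verify conditions (1)--(3) of Theorem~\ref{result1} via the dimension identity $\dim\pi(Y)=\dim(Y)-\dim\bigl(N(H)/H\bigr)$ (which the paper obtains from a slice for the $N(H)$-action on $N(H)Y$), and descend the resulting equivariant isotopy to the orbit space as in Corollary~\ref{cor1}. The argument is correct and matches the paper's proof in all essentials.
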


\section{Group actions}
\label{group}

\noindent Let $G$ be a Lie group acting smoothly on a smooth (i.e., ${\rm C}^\infty$) manifold $M$.
The action is called {\it proper}, if the map
$$
\phi\colon G\times M\to M\times M,\,\,\, (g,x)\mapsto (gx,x),
$$
is proper, i.e., if the inverse images of compact sets are compact. In this case we call $M$ a {\it proper smooth $G$-manifold}. 
Notice that $\phi$ is a closed map, if it is proper (see \cite{Pa2}).
In particular, every action of a compact Lie group is proper.

Let $x\in M$. We denote the {\it isotropy subgroup} $\{ g\in G\mid gx=x\}$ of $x$ by $G_x$. If $G$ acts properly on $M$, then the isotropy
subgroup of every $x\in M$ is compact. The action is called {\it free}, if $G_x=\{e\}$, for every $x\in M$,
and {\it almost free}, if $G_x$ is finite for every $x\in M$. If the identity element $e$ of $G$ is the only element of $G$ that fixes every point
of $M$, we call the action {\it effective}.

For a compact subgroup $H$
of $G$ we define
$$
M_H=\{ x\in M\mid G_x=H\},\quad M_{(H)}=\{ x\in M\mid G_x\sim H\}
$$
and
$$
M^H=\{ x\in M\mid H\subset G_x\},
$$
where $G_x\sim H$ means that $G_x=gHg^{-1}$, for some $g\in G$. If $M$ is a proper smooth $G$-manifold, then the sets
$M_H$, $M_{(H)}$ and $M^H$ are $\Sigma$-submanifolds of $M$. In other words, each connected component of $M_H$,
$M_{(H)}$ and $M^H$ is a submanifold of $M$.
The dimension of the connected components may vary.
The sets satisfy the equality
$$
M_H= M^H\cap M_{(H)},
$$
see Corollary 4.2.8 in \cite{P}.  The set $M^H$, being a fixed point set, is closed in $M$.

The {\it orbit} of $x\in M$ is 
$
Gx=\{ gx\mid g\in G\},
$
and the {\it orbit space} is denoted by $G/M$. A subset $A\subset M$ is called $G$-{\it compact}, if  $\pi(A)$ is compact, where
$\pi\colon M\to M/G$, $x\mapsto Gx$, denotes the {\it natural projection}.

Let $H$ be a closed subgroup of a Lie group $G$, and assume $M$ is a proper smooth $H$-manifold.
We define the {\it twisted product} $G\times_HM$ to be the orbit space of the smooth $H$-manifold
$G\times M$, where $H$ acts on $G\times M$ by $h(g,x)=(gh^{-1}, hx)$. Let $p\colon G\times M\to
G\times_HM$ be the natural projection, and denote $p(g,x)=[g,x]$. Then $G$ acts on
$G\times_HM$ by $g'[g,x]=[g'g,x]$ and $G\times_HM$ is a proper smooth $G$-manifold. Let $N$ be a
smooth $G$-manifold. Then any smooth $H$-equivariant map $f\colon M\to N$ induces a smooth
$G$-equivariant map
$$
\tilde{f}\colon G\times_HM\to N,\,\,\, [g,x]\mapsto gf(x).
$$
For basic properties of twisted products, see \cite{I} or \cite{IK}.

We will be using
the  differentiable slice theorem (Proposition 2.2.2. in \cite{Pa}) for proper actions:  
Let $x\in M$, and let $Gx$ be the orbit of $x$ in $M$.  
Equip a $G$-invariant
neighborhood of $x$
with a smooth $G$-invariant Riemannian metric.
Then $M$ has a smooth
$G_x$-invariant submanifold ${\rm N}_x$ containing $x$
that is $G_x$-equivariantly diffeomorphic
to an open $G_x$-invariant neighborhood of the origin in the normal space
${\rm T}_xM/{\rm T}_xGx$ to $Gx$ at $x$. The manifold ${\rm N}_x$ is called a 
{\it linear slice} (or just a {\it slice}) at $x$.  The exponential map takes an open 
$G$-invariant neighborhood of the 
zero section of the normal bundle of $Gx$ diffeomorphically to the open neighborhood 
$G{\rm N}_x$ of $Gx$.
The map
$$
G\times _{G_x}{\rm N}_x\to G{\rm N}_x,\, \, \,
[g,y]\mapsto gy,
$$
is a smooth $G$-equivariant diffeomorphism.  
Therefore,
we may identify $G{\rm N}_x$ with the twisted product
$G\times _{G_x}{\rm N}_x$. The map
$$
G{\rm N}_x\to G/G_x,\,\,\, gy\mapsto gG_x,
$$
is smooth and the inverse image of $eG_x$ equals ${\rm N}_x$.
The map 
$$
G\times {\rm N}_x\to G{\rm N}_x,\,\,\, (g,y)\mapsto gy,
$$
is open.

\begin{lemma}
\label{samat}
Let $G$ be a Lie group and let $H$ be a compact subgroup of $G$. Let $M$ be a proper smooth $G$-manifold. Assume
$x\in M$ and $G_x=H$. Let ${\rm N}$ be a slice at $x$. Then ${\rm N}_H={\rm N}_{(H)}={\rm N}^H$ and $(G{\rm N})_H=(G{\rm N})^H$.
\end{lemma}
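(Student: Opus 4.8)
The plan is to begin with the single structural fact that underlies every assertion: for a slice ${\rm N}$ at $x$ with $G_x=H$, the isotropy group of every point of ${\rm N}$ is contained in $H$. This follows directly from the identification $G{\rm N}\cong G\times_H{\rm N}$ provided by the slice theorem. Indeed, a point $y\in{\rm N}$ corresponds to $[e,y]$, and if $a\in G_y$ then $a[e,y]=[a,y]=[e,y]$; by the definition of the twisted product (where $[g_1,y_1]=[g_2,y_2]$ requires $h\in H$ with $g_2=g_1h^{-1}$ and $y_2=hy_1$) this forces $a\in H$. Hence $G_y\subset H$ for all $y\in{\rm N}$.

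Granting this, I would prove the chain ${\rm N}_H={\rm N}_{(H)}={\rm N}^H$ by separate inclusions. For ${\rm N}^H={\rm N}_H$: the inclusion ${\rm N}_H\subset{\rm N}^H$ is immediate since $G_y=H$ gives $H\subset G_y$; conversely, if $y\in{\rm N}^H$ then $H\subset G_y$, and combined with $G_y\subset H$ this yields $G_y=H$. For ${\rm N}_{(H)}={\rm N}_H$: again ${\rm N}_H\subset{\rm N}_{(H)}$ is trivial, and for the reverse, if $y\in{\rm N}_{(H)}$ then $G_y=gHg^{-1}$ for some $g\in G$, so $G_y$ is a compact subgroup conjugate to $H$ yet contained in $H$. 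I would then invoke the elementary fact that a compact subgroup conjugate to $H$ and contained in $H$ must equal $H$: the two share the same dimension, hence the same identity component, and the same finite number of components, which forces equality. This gives $G_y=H$.

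For the second equality $(G{\rm N})_H=(G{\rm N})^H$, the nontrivial direction is $(G{\rm N})^H\subset(G{\rm N})_H$, and this is the step I expect to be the main obstacle, since it involves points of the slice moved by arbitrary elements of $G$ rather than only by $H$. Writing $z\in(G{\rm N})^H$ as $z=gy$ with $y\in{\rm N}$, one has $G_z=gG_yg^{-1}$ and $H\subset G_z$, hence $g^{-1}Hg\subset G_y\subset H$. The same conjugate-and-contained lemma then shows $g^{-1}Hg=H$, that is $g\in N(H)$; feeding this back gives $H\subset G_y$, so $G_y=H$ and therefore $G_z=gHg^{-1}=H$, placing $z$ in $(G{\rm N})_H$. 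The reverse inclusion $(G{\rm N})_H\subset(G{\rm N})^H$ is immediate. The whole argument thus reduces to the slice property $G_y\subset H$ together with the rigidity of compact subgroups under conjugation, and I would isolate the latter as the recurring technical step.
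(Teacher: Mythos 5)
Your proposal is correct and follows essentially the same route as the paper: the slice property $G_y\subset G_x=H$ for $y\in{\rm N}$, the conjugation formula $G_{gy}=gG_yg^{-1}$, and the rigidity fact that a closed subgroup of $H$ conjugate to $H$ must equal $H$. The only difference is cosmetic: the paper cites this rigidity fact (Lemma 4.2.9 in \cite{P}) and states the slice property without derivation, whereas you prove both from scratch (via dimension and component counting, and via the twisted product, respectively), both correctly.
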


\begin{proof} By Lemma 4.2.9 in \cite{P},  the only closed subgroup  of $H$ that is conjugate to $H$ is $H$ itself.
The claims follow, since for every $y\in {\rm N}$, $G_y\subset G_x=H$ and since $G_{gy}=gG_yg^{-1}$, for all 
$y\in {\rm N}$ and for all $g\in G$.
\end{proof}

\begin{lemma}
\label{G-paksu}
Let $G$ be a Lie group and let $H$ be a compact subgroup of $G$. Let $M$ be a proper smooth
$G$-manifold. Assume $G_x=H$ for all $x\in M$. Let $\phi\colon G\times M\to M\times M$, 
$(g,x)\mapsto (gx,x)$. Let $G$ act diagonally on $M\times M$.
Then $\phi(G\times M)$ is a closed, smooth  $G$-invariant submanifold of $M\times M$, and the map
$$
f\colon G/H\times M\to M\times M,\,\,\, (gH,x)\mapsto (gx,x),
$$
is a smooth embedding with image $\phi(G\times M)$.
\end{lemma}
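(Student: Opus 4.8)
The plan is to reduce the statement to the standard fact that the orbit map of a free, proper action is a closed embedding, and then to verify the hypotheses of that fact directly from the condition $G_x=H$ for all $x\in M$. That condition has two useful consequences: first, every $h\in H=G_x$ fixes $x$, so $H$ acts trivially on all of $M$; second, $gHg^{-1}=G_{gx}=H$ for every $g$, so $H$ is normal in $G$. Thus $G/H$ is a Lie group acting smoothly and freely on $M$ by $gH\cdot x=gx$, and under this identification $f$ is precisely the orbit map $(gH,x)\mapsto(gx,x)$ of that action. I will, however, verify the required properties of $f$ by hand rather than quoting the principal-bundle machinery.

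First I would record that $f$ is well defined, smooth, and injective. It is well defined since $ghx=gx$ for $h\in H$, as $H$ acts trivially. For smoothness, note that $q\colon G\times M\to (G/H)\times M$, $(g,x)\mapsto (gH,x)$, is a surjective submersion (being the product of the bundle projection $G\to G/H$ with $\mathrm{id}_M$) and that $\phi=f\circ q$; hence $f$ is smooth. Injectivity is immediate: $f(gH,x)=f(g'H,x')$ forces $x=x'$ and $gx=g'x$, so $g^{-1}g'\in G_x=H$, i.e. $gH=g'H$.

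The step I expect to require the most care is showing that $f$ is an immersion. Here I would use that $f$ intertwines the $G$-action $g'\cdot(gH,x)=(g'gH,x)$ on the domain with the diffeomorphism $(a,b)\mapsto(g'a,b)$ of $M\times M$; since both are given by diffeomorphisms, it suffices to prove that $df$ is injective at the points $(eH,x_0)$. Writing $\xi_M$ for the fundamental vector field of $\xi\in\mathrm{Lie}(G)$ and differentiating in the $(G/H)$- and $M$-directions yields
\[
df_{(eH,x_0)}\bigl(\bar\xi,v\bigr)=\bigl(\xi_M(x_0)+v,\;v\bigr),\qquad \bar\xi=\xi+\mathrm{Lie}(H),\ \ v\in T_{x_0}M.
\]
If this vanishes, then $v=0$ and $\xi_M(x_0)=0$; the latter means $\exp(t\xi)x_0=x_0$ for all $t$, hence $\exp(t\xi)\in G_{x_0}=H$ for all $t$, and therefore $\xi\in\mathrm{Lie}(H)$, i.e. $\bar\xi=0$. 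Thus $df_{(eH,x_0)}$ is injective and $f$ is an immersion.

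Finally I would promote the injective immersion to a closed embedding by proving that $f$ is proper, and then identify the image. Properness descends from that of $\phi$: for compact $C\subset M\times M$ one has $f^{-1}(C)=q\bigl(\phi^{-1}(C)\bigr)$, which is compact because $\phi^{-1}(C)$ is compact by properness of the $G$-action and $q$ is continuous. A proper injective immersion is a closed embedding, so $f$ is a diffeomorphism onto a closed embedded submanifold of $M\times M$. Since $q$ is surjective and $\phi=f\circ q$, this submanifold is exactly $\phi(G\times M)=\{(a,b)\in M\times M\mid a\in Gb\}$, the graph of the orbit equivalence relation, which is manifestly invariant under the diagonal $G$-action. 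Together these observations give all the assertions of the lemma.
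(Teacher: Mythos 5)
Your proof is correct, but it reaches the conclusion by a genuinely different route from the paper's. Your key structural observation --- that $G_x=H$ for all $x$ forces $H$ to be normal in $G$ and to act trivially on $M$, so that $f$ is literally the graph map of a free proper action of the Lie group $G/H$ --- does not appear in the paper. From it you obtain smoothness of $f$ via the universal property of the surjective submersion $q\colon G\times M\to (G/H)\times M$ (using $\phi=f\circ q$); the immersion property from an infinitesimal computation with fundamental vector fields, where $\xi_M(x_0)=0$ forces $\exp(t\xi)\in G_{x_0}=H$ by uniqueness of integral curves and hence $\bar\xi=0$; and both the topological embedding property and the closedness of the image in one stroke, by showing $f$ is proper (from $f^{-1}(C)=q\bigl(\phi^{-1}(C)\bigr)$ and properness of the action) and invoking the fact that a proper injective immersion into a manifold is a closed embedding. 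The paper instead argues pointwise with the slice theorem: closedness of $\phi(G\times M)$ comes from properness (hence closedness) of $\phi$ itself; continuity of $f^{-1}$ is proved by exhibiting it locally, on tubes $G{\rm N}\times G{\rm N}$ built from a linear slice ${\rm N}$, as the smooth map $(gy,z)\mapsto (gH,z)$; smoothness of $f$ comes from local sections of $G\to G/H$; and the immersion property is obtained by restricting $f$ to the factors $G/H\times\{x\}$ and $\{gH\}\times M$ and observing that the images of the two partial differentials intersect only in $0$. Your argument is more conceptual and delivers the embedding and closedness simultaneously; the paper's is more elementary and reuses the slice machinery on which the rest of the paper depends. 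Both are complete proofs of all the assertions of the lemma.
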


\begin{proof}
The map $\phi$ is a proper map, since $G$ acts properly on $M$. Thus $\phi$ is a closed map, which implies that
$\phi(G\times M)$ is closed in $M\times M$. Clearly, $\phi(G\times M)$ is $G$-invariant.

The map $f$ is well-defined, because $G_x=H$, for every $x\in M$. Clearly, the image of $f$ is
$\phi(G\times M)$. The map $f$ is injective, the inverse map, defined in the image of $f$ is
$$
f^{-1}\colon \phi(G\times M)\to G/H\times M,\,\,\, (gx,x)\mapsto (gH,x).
$$
Let $(gx,x)\in M\times M$, and let ${\rm N}$ be a linear slice at $x$.
Then $G{\rm N}$ is open in $M$, and the map
$$
\nu\colon G{\rm N}\times G{\rm N}\to G/H\times G{\rm N},\,\,\, (gy,z)\mapsto (gH,z),
$$
where $g\in G$ and $y\in{\rm N}$,
is smooth. The maps $f^{-1}$ and $\nu$ agree in the intersection 
of their domains. 
Thus the restriction of $f^{-1}$ to this intersection is continuous. Since $x$
was chosen arbitrarily, it follows that $f^{-1}$ is continuous.

Let $g_0H\in  G/H$. Then $g_0H$ has an open neighborhood
$U$ such that the quotient map  $G\to G/H$, $g\mapsto gH$, has a smooth local section $\gamma\colon U\to G$. 
Then $f(gH,x)=(\gamma(gH)x,x)$, for every $(gH,x)\in U\times M$. It follows that the restriction of $f$ to $U\times M$ is smooth and
furthermore that $f$ is smooth. To complete the proof, it remains to show that $f$ is an immersion.  Let
$(gH,x)\in G/H\times M$. The restriction of $f$ to $G/H\times \{x\}$ is a smooth embedding with the image $Gx\times \{ x\}$.
Thus the restriction of $df$ to ${\rm T}_{gH}(G/H)$ is injective. Moreover, the restriction of $f$ to $\{ gH\}\times M$ is a smooth 
embedding, and thus also the restriction of $df$ to ${\rm T}_xM$ is injective. Since
$$
df\bigl({\rm T}_{gH}(G/H)\bigr)\cap df({\rm T}_xM)=\{0\},
$$
it follows that $df$ is injective. Thus $f$ is an immersion.
\end{proof}

\begin{lemma}
\label{avoinkuvaus}
Let $X$ and $Y$ be topological spaces, and let $f\colon X\to Y$ be an open map. Assume 
$Z\subset Y$ satisfies $f\bigl(f^{-1}(Z)\bigr)=Z$. Then the induced map
$$
\tilde{f}\colon f^{-1}(Z)\to Z,\,\,\, x\mapsto f(x),
$$
is open. 
\end{lemma}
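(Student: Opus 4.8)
The plan is to argue directly from the definition of the subspace topology, reducing the openness of $\tilde f$ to a single set-theoretic identity. Equip $f^{-1}(Z)$ with the subspace topology inherited from $X$ and $Z$ with the subspace topology inherited from $Y$. Then a typical open subset of $f^{-1}(Z)$ has the form $U=V\cap f^{-1}(Z)$ for some open set $V\subset X$, and to show that $\tilde f$ is open it suffices to verify that $\tilde f(U)$ is open in $Z$ for every such $U$.

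The key step is to establish the identity $\tilde f\bigl(V\cap f^{-1}(Z)\bigr)=f(V)\cap Z$. The inclusion from left to right is immediate: any point of the left-hand side is $f(x)$ with $x\in V$ and $x\in f^{-1}(Z)$, hence lies both in $f(V)$ and in $Z$. For the reverse inclusion I would take $y\in f(V)\cap Z$, write $y=f(x)$ with $x\in V$, and observe that $f(x)=y\in Z$ forces $x\in f^{-1}(Z)$; thus $x\in V\cap f^{-1}(Z)$ and $y\in\tilde f\bigl(V\cap f^{-1}(Z)\bigr)$. This identity is really the entire content of the lemma, and I expect its verification — essentially just the remark that a preimage point of an element of $Z$ automatically lies in $f^{-1}(Z)$ — to be the only place where any care is needed.

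With the identity in hand the conclusion follows at once: since $f$ is open, $f(V)$ is open in $Y$, so $f(V)\cap Z$ is open in $Z$ by definition of the subspace topology, and therefore $\tilde f(U)=f(V)\cap Z$ is open in $Z$. As $U$ was an arbitrary open subset of $f^{-1}(Z)$, the map $\tilde f$ is open. I would also note that the hypothesis $f\bigl(f^{-1}(Z)\bigr)=Z$ is not actually needed to obtain openness; its role is to guarantee that $\tilde f$ surjects onto $Z$, so that the codomain appearing in the statement coincides with the image of $\tilde f$.
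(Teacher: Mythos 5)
Your proof is correct and follows essentially the same route as the paper: both reduce the claim to the set identity $\tilde f\bigl(V\cap f^{-1}(Z)\bigr)=f(V)\cap Z$ and verify the two inclusions in the same way. Your closing observation is also accurate — the paper invokes $f\bigl(f^{-1}(Z)\bigr)=Z$ only through the automatic inclusion $f\bigl(f^{-1}(Z)\bigr)\subset Z$, so the full hypothesis serves only to make $\tilde f$ surjective onto the stated codomain.
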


\begin{proof}
Let $U$ be an open subset of $f^{-1}(Z)$. Then $U=O\cap f^{-1}(Z)$ for some open subset
$O$ of $X$, and 
$\tilde{f}(U)=f\bigl(O\cap f^{-1}(Z)\bigl)$. We will show that $f\bigl(O\cap f^{-1}(Z)\bigr)=f(O)\cap Z$, which will
prove the claim. Clearly, 
$$
f\bigl(O\cap f^{-1}(Z)\bigr)\subset f(O)\cap f\bigl(f^{-1}(Z)\bigr)=f(O)\cap Z.
$$
Assume then $y\in f(O)\cap Z$. Then $y=f(x)$, for some $x\in O$. But then
$x\in f^{-1}(y)\subset f^{-1}(Z)$, which implies that $x\in O\cap f^{-1}(Z)$. Therefore,
$y=f(x)\in f\bigl(O\cap f^{-1}(Z)\bigr)$. Consequently, $f(O)\cap Z\subset f\bigl(O\cap f^{-1}(Z)\bigr)$.
\end{proof}

Let $H$ be a compact subgroup of a Lie group $G$. We denote the {\it normalizer}
of $H$ in $G$ by $N(H)$ and the connected component of $N(H)$ containing the identity
element by $N(H)_0$. Then $N(H)$ and $N(H)_0$ are closed subgroups of $G$.

\begin{lemma}
\label{MHconncomp}
Let $G$ be a Lie group and let $H$ be a compact subgroup of $G$. Let $M$ be a proper smooth $G$-manifold,
and let $\pi\colon M\to M/G$ be the natural projection. Then the connected components of $\pi(M_H)$ are the sets
$\pi(Y_i)$, where $Y_i$, $i\in J$, are the connected components of $M_H$.
\end{lemma}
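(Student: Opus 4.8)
The plan is to show that the distinct sets among the $\pi(Y_i)$, $i\in J$, form a partition of $\pi(M_H)$ into open connected subsets. Since a partition of a topological space into open (hence also closed) connected subsets coincides with its decomposition into connected components, this will prove the lemma. Thus I will need three things: that each $\pi(Y_i)$ is connected, that two such sets are either disjoint or equal, and — the crux — that each $\pi(Y_i)$ is open in $\pi(M_H)$.

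The first two are quick. Each $\pi(Y_i)$ is connected, being the continuous image of the connected set $Y_i$. For the dichotomy, suppose $\pi(x)=\pi(y)$ with $x\in Y_i$ and $y\in Y_j$. Then $y=gx$ for some $g\in G$, and since $G_x=G_y=H$ and $G_y=gG_xg^{-1}$, we get $gHg^{-1}=H$, i.e.\ $g\in N(H)$. The homeomorphism $z\mapsto gz$ then maps $M_H$ onto $M_H$ and carries the component $Y_i$ onto the component containing $gx=y$, namely $Y_j$; hence $gY_i=Y_j$ and $\pi(Y_i)=\pi(gY_i)=\pi(Y_j)$. So any two of the sets $\pi(Y_i)$ are disjoint or equal.

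The essential step is openness, and this is where the slice theorem enters. Fix $x\in Y_i$ and take a slice ${\rm N}$ at $x$, chosen so that ${\rm N}^H$ is connected (e.g.\ corresponding to an $H$-invariant ball in the normal representation, so that ${\rm N}^H$ is convex). For $y\in{\rm N}$ one has $G_y\subset G_x=H$, and if $gy\in(G{\rm N})_H$ then $g^{-1}Hg=G_y\subset H$, which by the fact that a compact group is not conjugate to a proper subgroup of itself (Lemma 4.2.9 in \cite{P}, as used in Lemma \ref{samat}) forces $g\in N(H)$ and $y\in{\rm N}_H$. With ${\rm N}_H={\rm N}^H$ from Lemma \ref{samat}, this identifies $(G{\rm N})_H=N(H)\,{\rm N}^H$. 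Now $G{\rm N}$ is open and $G$-invariant and $\pi$ is open, so $\pi(G{\rm N})$ is an open neighborhood of $\pi(x)$; and since $G{\rm N}$ is $G$-invariant, a point of $\pi(M_H)$ lies in $\pi(G{\rm N})$ exactly when its representative in $M_H$ lies in $G{\rm N}$. Hence $\pi(G{\rm N})\cap\pi(M_H)=\pi\bigl((G{\rm N})_H\bigr)=\pi(N(H)\,{\rm N}^H)=\pi({\rm N}^H)$. As ${\rm N}^H$ is connected, is contained in $M_H$, and contains $x$, it lies in $Y_i$, so this open neighborhood of $\pi(x)$ is contained in $\pi(Y_i)$. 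Letting $x$ range over $Y_i$ shows $\pi(Y_i)$ is open in $\pi(M_H)$.

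Finally I would assemble the pieces: the distinct sets among the $\pi(Y_i)$ cover $\pi(M_H)$, are pairwise disjoint, open, and connected. Each is therefore also closed in $\pi(M_H)$ (its complement being the union of the remaining ones), hence clopen and connected, so each is a connected component, and together they exhaust the components. I expect the openness step to be the main obstacle, since it is the only place requiring the local normal form of $M_H$ near $x$; concretely, the work is in verifying $\pi(G{\rm N})\cap\pi(M_H)=\pi({\rm N}^H)$, which relies on the slice theorem together with Lemma \ref{samat} and the non-conjugacy fact for compact groups.
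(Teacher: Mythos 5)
Your proof is correct, and its skeleton --- connectedness of each $\pi(Y_i)$, the dichotomy via $g\in N(H)$, and openness of each $\pi(Y_i)$ in $\pi(M_H)$ --- matches the paper's. The difference is in how openness, which you rightly identify as the crux, is established. The paper shows that the restriction $\pi\vert\colon M_H\to\pi(M_H)$ is an open map: it applies Lemma \ref{avoinkuvaus} to the saturated set $M_{(H)}$ to get that $\pi\vert\colon M_{(H)}\to\pi(M_{(H)})$ is open, then invokes Theorem 4.3.10 of \cite{P} (for $A$ open in $M_H$, the set $GA$ is open in $M_{(H)}$), and finally uses that each $Y_i$ is open in $M_H$ because $M_H$ is a $\Sigma$-manifold. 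You instead prove openness directly from the slice theorem, verifying $(G{\rm N})_H=N(H)\,{\rm N}^H$ via the fact that a compact group is not conjugate to a proper subgroup of itself, and concluding $\pi(G{\rm N})\cap\pi(M_H)=\pi({\rm N}^H)\subset\pi(Y_i)$. Your route is more self-contained --- it essentially reproves the relevant local content of Pflaum's stratification theorem --- at the mild cost of requiring the slice ${\rm N}$ to be chosen with ${\rm N}^H$ connected (harmless, since a linear slice can be taken to correspond to an invariant ball, and the paper itself uses this in Lemma \ref{viip.isot.}); the paper's route is shorter by outsourcing exactly that local analysis to the cited theorem. Both arguments ultimately rest on the differentiable slice theorem.
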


\begin{proof}
Since the sets $Y_i$ are connected, also the images $\pi(Y_i)$ are connected. Assume  $\pi(Y_i)\cap \pi(Y_j)\not=\emptyset$,
for some $i,j\in J$. Then there are $g_i, g_j\in G$, $y_i\in Y_i$ and $y_j\in Y_j$, with $g_iy_i=g_jy_j$. Hence
$g_j^{-1}g_iy_i=y_j\in M_H$, and it follows that $g_j^{-1}g_i\in N(H)$. As  a diffeomorphism
$M_H\to M_H$,  $g_j^{-1}g_i$ takes connected components onto connected components. 
Thus $g_j^{-1}g_iY_i=Y_j$, and $\pi(Y_i)=\pi(Y_j)$. 
By Lemma \ref{avoinkuvaus}, the restriction $\pi\vert \colon M_{(H)}\to \pi(M_{(H)})$ is open.
It follows from Theorem 4.3.10 in \cite{P}, that for any open subset $A$ in  $M_H$, the subset $GA$ is open
in $M_{(H)}$. Thus also the restriction $\pi\vert\colon  M_H\to \pi(M_H)$ is open.
Since $M_H$ is a $\Sigma$-manifold, each $Y_i$ is open in $M_H$. 
Thus each $\pi(Y_i)$ is open in $\pi(M_H)$, and the claim follows.
\end{proof}

\section{Equivariant isotopies}
\label{isot.section}

\noindent Let $G$ be a Lie group and let $M$ and $N$ be  proper smooth $G$-manifolds.  
Let $I=[0,1]$. A map
$$
F\colon M\times I\to N
$$
is a $G$-{\it equivariant isotopy}, if it is smooth and  if every map
$$
F_t\colon M\to N,\,\,\, x\mapsto F(x,t),
$$
is a smooth $G$-equivariant diffeomorphism.
The {\it support}  ${\rm supp}(f)$ of
a $G$-equivariant diffeomorphism $f\colon M\to M$ is the closure of the set
$$
\{ x\in M\mid f(x)\not=x\},
$$
and the support of a $G$-equivariant isotopy $F\colon M\times I\to M$ is the closure of the set
$$
\{ x\in M\mid F(x,t)\not= x\,\, {\rm for \,\, some \,\,} t\in I\}.
$$
The support of a map $f\colon M\to {\mathbb{R}}$ is the closure of the set
$\{ x\in M\mid f(x)\not=0\}$.
An equivariant diffeomorphism, isotopy  or a real valued map
is called $G$-{\it compactly supported}, if it's support is
$G$-compact. We denote by ${\rm Diff}^G_c(M)$
the group of $G$-equivariant diffeomorphisms 
of $M$ isotopic to the identity through a $G$-equivariant $G$-compactly supported isotopy.

\begin{theorem}
\label{uusi.isot.jatko}
Let $G$ be a Lie group and let $M$ be a proper smooth $G$-manifold. Let $N$ be a closed smooth $G$-invariant submanifold
of $M$. Let $F\colon N\times I\to M$ be a $G$-equivariant,
$G$-compactly supported  isotopy of embeddings
such that $F_0$ is the canonical inclusion. 
Then $F$ extends to a $G$-equivariant, $G$-compactly supported
isotopy of $M$ starting at the identity.
\end{theorem}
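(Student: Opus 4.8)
The plan is to carry out the classical isotopy extension argument equivariantly, via the time-graph (suspension) trick, realizing the ambient isotopy as the flow of a time-dependent vector field on $M$. First I would let $G$ act on $M\times I$ through the given action on the first factor and trivially on $I$, and suspend $F$ by setting $\widehat{F}\colon N\times I\to M\times I$, $(x,t)\mapsto (F(x,t),t)$. Because every $F_t$ is an embedding, $N$ is closed in $M$, and $F$ is $G$-compactly supported, $\widehat{F}$ is a closed $G$-equivariant embedding whose image $W$ is a closed $G$-invariant submanifold of $M\times I$. Along $W$ I would consider the velocity field $\xi=d\widehat{F}(\partial/\partial t)$, which at $\widehat{F}(x,t)=(F(x,t),t)$ equals $\bigl(\tfrac{\partial}{\partial t}F(x,t),1\bigr)$. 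Differentiating the identity $F(gx,t)=gF(x,t)$ shows that $\xi$ is $G$-equivariant along $W$; moreover $\xi$ projects to $\partial/\partial t$ under $M\times I\to I$, and its vertical part $\tfrac{\partial}{\partial t}F$ vanishes off a $G$-compact subset of $W$, since $F$ is $G$-compactly supported.

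The central step is to extend $\xi$ to a global vector field $X=\partial/\partial t+\eta$ on $M\times I$, where $\eta$ is vertical (tangent to the slices $M\times\{t\}$), $G$-equivariant, $G$-compactly supported, and agrees with the vertical part of $\xi$ along $W$. For this I would use an equivariant tubular neighborhood of the closed invariant submanifold $W$, produced from the differentiable slice theorem for proper actions, to extend the vertical field off $W$, and then multiply by a $G$-invariant bump function that is identically $1$ on a $G$-compact neighborhood of the $G$-compact support of $\tfrac{\partial}{\partial t}F$. Both the equivariant extension and the cutoff are available because proper $G$-manifolds admit $G$-invariant partitions of unity, and both preserve $G$-compactness of the support.

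Next I would integrate $X$. Fixing a complete $G$-invariant Riemannian metric on $M$ together with $dt^2$ on $I$, the length of the vertical field $\eta$ is a $G$-invariant function that descends to a compactly supported function on $M/G$; hence $\eta$ has bounded speed and $X$ is complete. Equivariance of $X$ forces its flow $\Phi_s$ to commute with the $G$-action, and since $X$ projects to $\partial/\partial t$ the flow raises the $I$-coordinate at unit rate. Defining $H_t\colon M\to M$ by $\Phi_t(x,0)=(H_t(x),t)$ then yields a $G$-equivariant, $G$-compactly supported isotopy with $H_0=\mathrm{id}$. Because the integral curves of $X$ issuing from $W$ at time zero coincide with the curves $t\mapsto\widehat{F}(x,t)$ (here using $F_0=\mathrm{incl}$, so $\widehat{F}(x,0)=(x,0)$), one gets $H_t|_N=F_t$, so $H$ extends $F$ as required.

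The hard part will be the equivariant extension with support control: extending the velocity field $G$-equivariantly across $W$ while keeping the support $G$-compact, and then guaranteeing completeness of the resulting flow when $G$ is noncompact, where the complete $G$-invariant metric and the bounded-speed criterion are exactly what make the flow exist on all of $[0,1]$.
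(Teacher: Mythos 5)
Your proposal is correct and follows essentially the same route as the paper, which simply defers to the proof of Theorem 8.6 in \cite{K}: that proof is exactly the suspension/velocity-field isotopy extension argument you describe, carried out with an equivariant tubular neighborhood of the closed invariant submanifold. Your observation that $G$-compact support gives bounded velocity with respect to a complete $G$-invariant metric (hence completeness of the flow and $G$-compactness of the resulting support) is precisely the modification the paper says must be made to the bounded-velocity hypothesis of \cite{K}.
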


\begin{proof}
The proof is similar to the proof of Theorem 8.6 in \cite{K}. When the assumption of the isotopy having
bounded velocity is replaced by the assumption of the isotopy  being $G$-compactly supported, also the obtained
isotopy can be made $G$-compactly supported. Notice that what we call an isotopy here is called a diffeotopy in 
\cite{K} and what we call an isotopy of embeddings is called an isotopy in \cite{K}.
\end{proof}

\begin{lemma}
\label{viip.isot.}
Let $G$ be a Lie group and let $M$ be a proper smooth $G$-manifold. Let $x\in M$ and let ${\rm N}$ be a linear slice at $x$. 
Denote the isotropy subgroup $G_x$ of $x$ by $H$.  Assume $y\in N(H)_0{\rm N}_H$.
Then there exists a smooth $G$-equivariant isotopy 
$$
F\colon G{\rm N}_H\times I\to  G{\rm N}_H,
$$
with $G$-compact support, starting at the identity and such that $F_1(x)=y$.
\end{lemma}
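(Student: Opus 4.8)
The plan is to pass to an explicit product model of $G{\rm N}_H$ and then build $F$ as a concatenation of two pieces: one that moves $x$ to a slice point in the slice direction, and one that slides along the orbit direction using the identity component $N(H)_0$. First I would record the product description of $G{\rm N}_H$. By Lemma \ref{samat} we have ${\rm N}_H={\rm N}^H$, so $H$ acts trivially on ${\rm N}_H$. Using the slice identification $G{\rm N}\cong G\times_H{\rm N}$ and restricting to the smooth submanifold ${\rm N}_H$, the map $gz\mapsto(gH,z)$, for $g\in G$ and $z\in{\rm N}_H$, is a $G$-equivariant diffeomorphism
$$
G{\rm N}_H \xrightarrow{\ \cong\ } G/H\times{\rm N}_H,
$$
where $G$ acts only on the first factor by left translation. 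Here $x$ corresponds to $(eH,x)$, and writing $y=nw$ with $n\in N(H)_0$ and $w\in{\rm N}_H$, the point $y$ corresponds to $(nH,w)$. Since a linear slice is $H$-diffeomorphic to an $H$-invariant ball in the normal space $V={\rm T}_xM/{\rm T}_xGx$, the set ${\rm N}_H={\rm N}^H$ corresponds to a ball in $V^H$, hence is connected and contains $x$.

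Next I would handle the slice-direction move. Since ${\rm N}_H$ is a connected manifold containing $x$ and $w$, there is a compactly supported isotopy $\beta_t$ of ${\rm N}_H$ with $\beta_0={\rm id}$ and $\beta_1(x)=w$ (if $\dim{\rm N}_H=0$ then $x=w$ and this step is vacuous); such a $\beta$ exists by the standard $1$-transitivity of the compactly supported diffeomorphism group of a connected manifold. Setting $\Phi_t(gH,z)=(gH,\beta_t(z))$ defines a $G$-equivariant isotopy of $G/H\times{\rm N}_H$ whose support projects into $\pi({\rm supp}(\beta))$, which is compact; thus $\Phi$ is $G$-compactly supported and $\Phi_1$ sends $(eH,x)$ to $(eH,w)$.

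The orbit-direction move is the heart of the argument, and the place where equivariance and $G$-compact support must be reconciled. The key observations are that for $a\in N(H)$ the right translation $R_a\colon G/H\to G/H$, $gH\mapsto gaH$, is well defined and commutes with the left $G$-action, and that for $\xi$ in the Lie algebra $\mathfrak{n}$ of $N(H)_0$ the generating vector field $X_\xi$ of the flow $R_{\exp(t\xi)}$ is left-$G$-invariant on $G/H$. Choosing a compactly supported $\lambda\colon{\rm N}_H\to[0,1]$ with $\lambda(w)=1$, the vector field $(gH,z)\mapsto(\lambda(z)X_\xi(gH),0)$ on $G/H\times{\rm N}_H$ is $G$-invariant, complete, and $G$-compactly supported; its flow $\Psi^\xi_t(gH,z)=(g\exp(t\lambda(z)\xi)H,z)$ is therefore a $G$-equivariant, $G$-compactly supported isotopy that fixes every $z$ and moves $(eH,w)$ to $(\exp(\xi)H,w)$. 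Since $N(H)_0$ is connected I would write $n=\exp(\xi_1)\cdots\exp(\xi_k)$ and concatenate the isotopies $\Psi^{\xi_1},\ldots,\Psi^{\xi_k}$ to carry $(eH,w)$ to $(nH,w)=y$. Concatenating this with $\Phi$ and smoothing the time parameter yields the required $F$.

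I expect the main obstacle to be exactly this orbit-direction construction. Naively sliding along a path in $N(H)_0$ moves all of $G{\rm N}_H$ and so fails to be $G$-compactly supported, while any spatial cutoff must be performed without destroying $G$-equivariance or the diffeomorphism property. The left-invariance of $X_\xi$, which holds precisely because $\xi$ lies in the Lie algebra of the normalizer, is what lets the cutoff $\lambda(z)$ depend only on the slice coordinate while preserving $G$-invariance; and the reduction to one-parameter subgroups via connectedness of $N(H)_0$ is what converts the abstract path to $n$ into honest flows. The remaining points—smoothness and completeness of the flows, that the supports project to compact subsets of $M/G$, and the existence of $\beta$—are routine.
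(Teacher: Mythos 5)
Your proof is correct and follows the same two-step decomposition as the paper's: first move $x$ to the slice point $w\in{\rm N}_H$ using $1$-transitivity of the compactly supported diffeomorphism group of the connected manifold ${\rm N}_H$, extended $G$-equivariantly over $G{\rm N}_H$ (this step is identical to the paper's, including the remark that it works when $\dim({\rm N}_H)\leq 1$), and then slide along $N(H)_0$ with a cutoff that depends only on the slice coordinate. Where you genuinely differ is in how the orbit-direction move is realized. The paper takes a single smooth path $u\colon I\to N(H)_0$ from $e$ to $n$ and a cutoff $\psi$ with $\psi(w)=1$, sets $\varphi_t(z)=u\bigl(\psi(z)t\bigr)z$ and $\tilde\varphi_t(gz)=g\varphi_t(z)$, and then verifies by hand that each $\tilde\varphi_t$ is a diffeomorphism: injectivity and surjectivity via the normalizer property, immersivity and closedness of $\varphi_t$, and smoothness of $\tilde\varphi_t^{-1}$ via local sections of $G\to G/H$. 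You instead integrate the $G$-invariant vector field $(gH,z)\mapsto\bigl(\lambda(z)X_\xi(gH),0\bigr)$ for $\xi$ in the Lie algebra of $N(H)_0$, so that the diffeomorphism property comes for free from the flow of a complete vector field, at the cost of writing $n$ as a finite product of exponentials and concatenating the corresponding flows. The two constructions coincide when $u(s)=\exp(s\xi)$, so your route buys a shorter verification (the bulk of the paper's proof is exactly the by-hand check you avoid) in exchange for the extra bookkeeping of the product decomposition and of the identification $G{\rm N}_H\cong G/H\times{\rm N}_H$, which you should justify as the restriction of the twisted-product model $G{\rm N}\cong G\times_H{\rm N}$ to $G\times_H{\rm N}_H$, using Lemma \ref{samat} to see that $H$ acts trivially on ${\rm N}_H$. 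Both arguments are sound.
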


\begin{proof}  Assume first that $y\in {\rm N}_H$.
Since ${\rm N}$ is a linear slice at $x$, we may identify it with an orthogonal $H$-space. Then ${\rm N}_H$ is the fixed point set
of $H$ in ${\rm N}$, and hence connected as a linear subspace of ${\rm N}$.  Thus the group of compactly supported diffeomorphisms 
isotopic to the identity of ${\rm N}_H$ acts transitively on 
${\rm N}_H$ (see \cite{MV}). Notice that this holds also if  $\dim({\rm N}_H)=1$. 
Let $f\colon {\rm N}_H\to {\rm N}_H$ be such a diffeomorphism taking $x$ to $y$, and let $F$ be a
compactly supported  isotopy from ${\rm id}_{{\rm N}_H}$ to $f$.
Since $H$ acts trivially on ${\rm N}_H$, it follows that
the diffeomorphisms $F_t$, $t\in I$, are $H$-equivariant. Then
$$
\tilde{F}\colon G{\rm N}_H\times I\to G{\rm N}_H,\,\,\, (gz,t)\mapsto gF(z,t),
$$
is a $G$-equivariant,
$G$-compactly supported isotopy starting at the identity and such that $\tilde{F}_1(x)=y$.

Let then $k\in N(H)_0$. Let $u\colon I\to N(H)_0$ be a smooth path from the identity element $e$ of $G$ to
$k$. Let $\psi\colon {\rm N}_H\to I$ be a smooth  function having compact support and taking
$y$ to $1$. Let
$$
\varphi\colon {\rm N}_H\times I\to G{\rm N}_H,\,\,\, (z,t)\mapsto u\bigl(\psi(z)t\bigr)z.
$$
Then $\varphi_0$ is the inclusion ${\rm N}_H\hookrightarrow G{\rm N}_H$, $\varphi_1(y)=ky$, and each $\varphi_t$
is a smooth $H$-equivariant map. It is easy to check that each $\varphi_t$ is injective. 
We check that each
$\varphi_t$ is an immersion: Let
$$
f_0\colon G{\rm N}\to G/H,\,\,\, gz\mapsto gH,
$$
for all $g\in G$ and for all $z\in {\rm N}$.
Since ${\rm N}$ is a linear slice at $x$, there is a smooth cross-section $\sigma\colon O\to G$ of the
map $G\to G/H$, $g\mapsto gH$, defined in some $H$-invariant open neighborhood $O$ of $eH$ in
$G/H$, and a diffeomorphism
$$
q\colon O\times {\rm N}\to W,\,\,\, (o,z)\mapsto \sigma(o)z,
$$
onto an open subset $W$ of $M$. The inverse map of $q$ is 
$$
q^{-1}\colon W\to O\times {\rm N}, \,\,\, z\mapsto \Bigl( f_0(z), \sigma\bigl( f_0(z)\bigr)^{-1}z\Bigr).
$$
Similarly, for every $g_0\in G$, there is a diffeomorphism
$$
q_{g_0}^{-1}\colon g_0W\to g_0O\times {\rm N}, \,\,\,
z\mapsto \Bigl( g_0f_0(g_0^{-1}z), \sigma\bigl( f_0(g_0^{-1}z)\bigr)^{-1}g_0^{-1}z\Bigr).
$$
Let $z\in {\rm N}_H$. Then $\varphi_t(z)\in g_0W$, for some $g_0\in G$. Let
$$
{\rm pr}\colon g_0O\times {\rm N}\to {\rm N}
$$
be the projection. Then
\begin{align*}
({\rm pr}\circ q_{g_0}^{-1}\circ\varphi_t)(z)&= ({\rm pr}\circ q_{g_0}^{-1})
\Bigl( u\bigl(\psi(z)t\bigr)z\Bigr)\\
&=\sigma\Bigl( f_0\bigl( g_0^{-1}u\bigl(\psi(z)t\bigr)z\bigr)\Bigr)^{-1}
g_0^{-1}u\bigl(\psi(z)t\bigr)z\\
&=hz\\
&=z,
\end{align*}
for some $h\in H$. Thus the restriction
$$
{\rm pr}\circ q_{g_0}^{-1}\circ \varphi_t\vert \colon \varphi^{-1}_t(g_0W)\to {\rm N}
$$
is the inclusion, which implies that $\varphi_t$ is immersive at $z$.

We next show that
each $\varphi_t$ is a closed map: Let $A$ be a closed subset of ${\rm N}_H$. Assume $w$ is a point in the closure of
the image $\varphi_t(A)$. Then there is a sequence of points $\varphi_t(z_n)=u\bigl(\psi(z_n)t\bigr)z_n$ converging to $w$, where
$z_n\in A$, for every $n$.
The set
$$
K=\bigl\{ u\bigl( \psi(z)t\bigr)\mid z\in {\rm N}_H\bigr\}
$$
is compact, since $\psi({\rm N}_H)=I$  and $u$ is a path. By passing to a subsequence, if necessary, we may assume that
the sequence of points $u\bigl( \psi(z_n)t\bigr)$ converges to a point $u\bigl( \psi(z)t\bigr)$, for some $z\in {\rm N}_H$.
It follows that the sequence $(z_n)$ converges to $u\bigl( \psi(z)t\bigr)^{-1}w=\tilde{z}$. 
Since $A$ is closed, it follows that
$\tilde{z}\in A$. The sequence $\Bigl(u\bigl(\psi(z_n)t\bigr)z_n\Bigr)$ converges to $u(\psi(z)t)\tilde{z}$. 
Now, $z_n\to \tilde{z}$ implies $u\bigl(\psi(z_n)t\bigr)\to u\bigl(\psi(\tilde{z})t\bigr)$. Thus
$u\bigl( \psi(\tilde{z})t)\bigr)=u\bigl( \psi(z)t\bigr)$ and hence
$w=u\bigl(\psi(z)t\bigr)\tilde{z}=u\bigl(\psi(\tilde{z})t\bigr)\tilde{z}=
\varphi_t(\tilde{z})\in \varphi_t(A)$.
It follows that $\varphi_t(A)$ is closed, and hence that $\varphi_t$ is a closed
map.

We proved that $\varphi$ is a compactly supported $H$-equivariant isotopy of embeddings. Let
$$
\tilde{\varphi}\colon G{\rm N}_H\times I \to G{\rm N}_H,\,\,\, (gz,t)\mapsto g\varphi(z,t).
$$
Clearly, $\tilde{\varphi}_1(y)=ky$.
To show that
$\tilde{\varphi}$ is a $G$-compactly supported $G$-equivariant isotopy starting at the identity, it
suffices to show that each $\tilde{\varphi}_t$ is bijective, and that the inverse map
$\tilde{\varphi}_t^{-1}$ is smooth.
Let $g_0\in G$ and $z_0\in {\rm N}_H$, so that $g_0z_0\in G{\rm N}_H$.
Then $g_0u\bigl(\psi(z_0)t\bigr)^{-1}\in G$ and 
$$
\tilde{\varphi}_t\Bigl(g_0u\bigl(\psi(z_0)t\bigr)^{-1}z_0\Bigr)=g_0u\bigl(\psi(z_0)t\bigr)^{-1}u\bigl(\psi(z_0)t\bigr)z_0=g_0z_0.
$$ Thus $\tilde{\varphi}_t$ is surjective.

Assume then $\tilde\varphi_t(g_1z_1)=\tilde\varphi_t(g_2z_2)$. Then $g_1u\bigl(\psi(z_1)t\bigr)z_1=
g_2u\bigl(\psi(z_2)t\bigr)z_2$. Thus $g_1u\bigl(\psi(z_1)t\bigr)=g_2u\bigl(\psi(z_2)t\bigr)h$, for some 
$h\in H$. It follows that $z_1=z_2$, and moreover that $u\bigl(\psi(z_1)t)\bigr)=u\bigl(\psi(z_2)t\bigr)$.
Since $u\bigl( \psi(z_2)t\bigr)^{-1}g_1^{-1}g_2u\bigl(\psi(z_2)t\bigr)\in H$, 
$g_1^{-1}g_2\in u\bigl( \psi(z_2)t\bigr) Hu\bigl( \psi(z_2)t\bigr)^{-1}=H$. Thus $g_1=g_2h'$, for some
$h'\in H$, and it follows that $g_1z_1=g_2z_2$. Hence $\tilde\varphi_t$ is injective.

It remains to check that $\tilde{\varphi}_t^{-1}\colon G{\rm N}_H\to G{\rm N}_H$ is smooth. An arbitrary element in 
$G{\rm N}_H$ can be written as $g_0u\bigl( \psi(z_0t)\bigr)z_0$, where $g_0\in G$ and $z_0\in
{\rm N}_H$. Then $g_0u\bigl(\psi(z_0)t\bigr)\in G$. The restriction
$$
f_0\vert \colon G{\rm N}_H\to G/H,\,\,\, gz\mapsto gH,
$$
is smooth. The map
$G\to G/H$, $g\mapsto gH$,
has a smooth local cross-section $\sigma\colon U\to G$, where $U$ is an open
$H$-invariant neighborhood of $g_0u\bigl(\psi(z_0)t\bigr)H$ in $G/H$. The set
$f_0^{-1}(U)$ is open in $G{\rm N}_H$ and $g_0u\bigl(\psi(z_0)t\bigr)z_0\in f_0^{-1}(U)$.
The map
\begin{align*}
(\sigma^{-1}\circ f_0\vert, {\rm id})\colon   f_0^{-1}(U)& \to G\times f_0^{-1}(U), \\
 gu\bigl(\psi(z)t\bigr)z &\mapsto 
\Bigl( \sigma\bigl(gu\bigl(\psi(z)t\bigr)H\bigr)^{-1}, gu\bigl(\psi(z)t\bigr)z\Bigr), 
\end{align*}
is smooth. Since $G$ acts smoothly on $M$, it follows that the map
$$
f_1\colon f_0^{-1}(U)\to G{\rm N}_H,\,\,\,
gu\bigl( \psi(z)t\bigr)z\mapsto
\sigma\bigl( gu\bigl(\psi(z)t\bigr)H\bigr)^{-1}gu\bigl( \psi(z)t\bigr)z=z,
$$
is smooth. Finally, the map
$$
p\colon {\rm N}_H\to G{\rm N}_H,\,\,\, 
z\mapsto u\bigl(\psi(z)t\bigr)^{-1}z,
$$
is smooth. Thus the map
\begin{align*}
(\sigma\circ f_0\vert, p\circ f_1)\colon 
f_0^{-1}(U)&\to G\times G{\rm N}_H,\\
 gu\bigl(\psi(z)t\bigr)z&\mapsto 
\Bigl( \sigma\bigl(gu\bigl(\psi(z)t\bigr)H\bigr), 
u\bigl( \psi(z)t\bigr)^{-1}z\Bigr),
\end{align*}
is smooth. Consequently,
\begin{align*}
\tilde{\varphi}_t^{-1}\vert\colon   f_0^{-1}(U)&\to G{\rm N}_H,\\
  gu\bigl(\psi(z)t\bigr)z&\mapsto
 \sigma\bigl( gu\bigl(\psi(z)t)H\bigr)u\bigl(\psi(z)t\bigr)^{-1}z=gz,
\end{align*}
is smooth.

An isotopy, as in the claim of the lemma, is obtained as a composition of isotopies of the form $\tilde{F}$ and
$\tilde{\varphi}$ of the proof.
\end{proof}

The following proposition will be used to prove Theorem \ref{result1}.

\begin{prop}
\label{apulause}
Let $G$ be a Lie group and let $H$ be a compact subgroup of $G$. Let $M$ be a proper smooth $G$-manifold and
let $x\in M$  with $G_x=H$. Let $U$ be an open $G$-invariant
neighborhood of $x$. Then there exists an open  neighborhood $V$ of $x$ in $M_{H}$ with the property that $GV\subset U$,
and for each $y\in V$ there exists $f\in {\rm Diff}^G_c(M)$ such that $f(x)=y$ and $f$ is isotopic to the identity through a
$G$-equivariant isotopy with $G$-compact support contained in $U$. 
\end{prop}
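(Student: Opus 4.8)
The plan is to reduce everything to a single slice and then propagate the local isotopy of Lemma \ref{viip.isot.} to all of $M$. First I would choose, via the differentiable slice theorem, a linear slice ${\rm N}$ at $x$ small enough that ${\rm N}\subset U$; since $U$ is $G$-invariant this gives $G{\rm N}\subset U$, and $G{\rm N}$ is an open $G$-invariant neighborhood of the orbit $Gx$. Identifying ${\rm N}$ with an orthogonal $H$-representation, Lemma \ref{samat} gives ${\rm N}_H={\rm N}^H$, the fixed-point subspace, which is a closed linear subspace of ${\rm N}$ and in particular connected. Under the identification $G{\rm N}\cong G\times_H{\rm N}$, and since $H$ acts trivially on ${\rm N}_H$, the subset $G{\rm N}_H$ corresponds to $(G/H)\times{\rm N}_H$; it is therefore a closed, smooth, $G$-invariant submanifold of the open proper smooth $G$-manifold $G{\rm N}$.

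Next I would identify the candidate neighborhood. A short isotropy computation shows $M_H\cap G{\rm N}=N(H){\rm N}_H$: for $z\in{\rm N}$ one has $G_z\subset H$, and $G_{gz}=gG_zg^{-1}=H$ forces $g^{-1}Hg=G_z\subset H$, which (as in Lemma \ref{samat}) yields $g\in N(H)$ and $z\in{\rm N}_H$. Passing to the twisted product this reads $M_H\cap G{\rm N}\cong (N(H)/H)\times{\rm N}_H$, and $N(H)_0{\rm N}_H$ corresponds to $(N(H)_0H/H)\times{\rm N}_H$. Because $N(H)_0H$ is an open subgroup of $N(H)$, the set $N(H)_0H/H$ is open in $N(H)/H$, so $V:=N(H)_0{\rm N}_H$ is an open neighborhood of $x$ in $M_H$. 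Moreover $V\subset G{\rm N}$, whence $GV\subset G{\rm N}\subset U$, as required.

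Then, given $y\in V=N(H)_0{\rm N}_H$, I would invoke Lemma \ref{viip.isot.} to obtain a $G$-equivariant, $G$-compactly supported isotopy $F\colon G{\rm N}_H\times I\to G{\rm N}_H$ with $F_0=\mathrm{id}$ and $F_1(x)=y$. Viewing $F$ as an isotopy of embeddings of the closed $G$-invariant submanifold $G{\rm N}_H$ into the proper smooth $G$-manifold $G{\rm N}$, Theorem \ref{uusi.isot.jatko} extends it to a $G$-equivariant, $G$-compactly supported isotopy $\hat F$ of $G{\rm N}$ starting at the identity. Finally I would extend $\hat F$ by the identity to all of $M$: its support $S$ is $G$-invariant and $G$-compact, hence $S=\pi^{-1}(K)$ with $K=\pi(S)$ compact, and therefore closed in $M$; since $S\subset G{\rm N}$, declaring $\hat F$ to be the identity on $M\setminus G{\rm N}$ produces a smooth $G$-equivariant isotopy of $M$ with support contained in $G{\rm N}\subset U$. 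Its time-one map $f\in{\rm Diff}^G_c(M)$ satisfies $f(x)=y$.

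The main obstacle is precisely this last gluing step: neither $G{\rm N}$ nor $G{\rm N}_H$ is closed in $M$, so Theorem \ref{uusi.isot.jatko} cannot be applied directly with ambient manifold $M$. The remedy is to run the extension theorem inside the open $G$-invariant submanifold $G{\rm N}$ and only afterwards extend by the identity, which is legitimate exactly because a $G$-compact, $G$-invariant subset of $M$ is automatically closed in $M$.
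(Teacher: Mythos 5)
Your proposal is correct and follows essentially the same route as the paper's proof: the same choice $V=N(H)_0{\rm N}_H$, the same use of Lemma \ref{viip.isot.} followed by Theorem \ref{uusi.isot.jatko} applied inside the open $G$-invariant submanifold $G{\rm N}$ (where $G{\rm N}_H$ is closed), and the same final extension by the identity using the $G$-compactness of the support. The only differences are cosmetic: you verify the openness of $V$ via the twisted-product picture rather than via the continuity of the map $gz\mapsto gH$, and you spell out in more detail why the glued isotopy is well defined on all of $M$.
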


\begin{proof}
We may assume that $U=G{\rm N}$, for some linear slice ${\rm N}$ at $x$.
Let $V=N(H)_0{\rm N}_H$. Since $G{\rm N}$ is open in $M$, it follows that
$N(H){\rm N}_H=G{\rm N}\cap M_H$ is open in $M_H$.  The map
$$
k\colon N(H){\rm N}_H\to N(H)/H,\quad gz\mapsto gH,
$$
is continuous as a restriction of the continuous map
$G{\rm N}\to G/H$, $gz\mapsto gH$. Thus
$V=N(H)_0{\rm N}_H= k^{-1}\bigl( N(H)_0H/H\bigr)$ is open 
in $N(H){\rm N}_H$ and, consequently, also in $M_H$.
Assume $y\in V$.
By Lemma \ref{viip.isot.}, there is a
smooth $G$-equivariant isotopy 
$$
F\colon G{\rm N}_H\times I\to  G{\rm N}_H,
$$
with $G$-compact support,   starting at the identity and such that $F_1(x)=y$.
Now, ${\rm N}_H$ is closed in ${\rm N}$. Thus $G{\rm N}\setminus G{\rm N}_H=
G({\rm N}\setminus {\rm N}_H)$ is open in $G{\rm N}$, and hence $G{\rm N}_H$
is closed in $G{\rm N}$. By Theorem \ref{uusi.isot.jatko}, the isotopy $F$ can be
extended to a smooth
$G$-equivariant isotopy 
$$
\tilde{F}\colon G{\rm N}\times I\to  G{\rm N},
$$
with $G$-compact support and starting at the identity.
Since $G{\rm N}$ is open in $M$, and since the support of $\tilde{F}$ is $G$-compact, it follows that
extending each $\tilde{F}_t$, $t\in I$, to be the identity on $M\setminus G{\rm N}$,
yields an isotopy $M\times I\to M$ having the properties in the claim.
\end{proof}

\section{Orbifolds}
\label{orbi}

\noindent  In this section we recall the definition and some basic properties of orbifolds.

\begin{definition}
\label{ensin}
Let $X$ be a topological space and let $n> 0$.
{\begin{enumerate}
\item An $n$-dimensional {\it orbifold chart} for an open subset $V$
of  $X$ is a triple
$(\tilde{V}, G,\varphi)$ satisfying the following conditions:
{\begin{enumerate}
\item $\tilde{V}$ is a connected open subset of ${\mathbb{R}}^n$,

\item $G$ is a finite group of homeomorphisms acting  on $\tilde{V}$, 
${\rm ker}(G)$ denotes the subgroup of $G$ acting trivially on $\tilde{V}$,

\item $\varphi\colon \tilde{V}\to V$ is a $G$-invariant map that induces a
homeomorphism from $\tilde{V}/G$ onto  $V$.
\end{enumerate}}

\item  If $V_i\subset V_j$, an {\it embedding} $(\lambda_{ij}, h_{ij})
\colon (\tilde{V}_i, G_i, \varphi_i)\to
(\tilde{V}_j, G_j,\varphi_j)$ means 
{\begin{enumerate}
\item an injective homomorphism $h_{ij}\colon G_i\to G_j$, 
such that $h_{ij}$ is an isomorphism from ${\rm ker}(G_i)$ to
${\rm ker}(G_j)$,
and
\item an equivariant embedding
$\lambda_{ij}\colon \tilde{V}_i\to \tilde{V}_j$ such that $\varphi_j\circ
\lambda_{ij}=\varphi_i$. (Thus 
$\lambda_{ij}(gx)=h_{ij}(g)\lambda_{ij}(x)$ for every $g\in G_i$
and every $x\in \tilde{V}_i$.)
\end{enumerate}}

\item  An {\it orbifold atlas} on $X$ is a family ${\mathcal V}=\{ (\tilde{V_i}, G_i, \varphi_i)\}_{
i\in J}$ of orbifold charts satisfying the following conditions:
{\begin{enumerate}
\item $\{ V_i\}_{i\in J}$ is a covering of $X$,

\item  given two charts $(\tilde{V}_i, G_i, \varphi_i)$ and
$(\tilde{V}_j, G_j, \varphi_j)$ and a point $x\in V_i\cap V_j$, there exists
an open neighborhood $V_k\subset V_i\cap V_j$ of $x$ and a chart
$(\tilde{V}_k, G_k, \varphi_k)$ such that there are embeddings 
$(\lambda_{ki}, h_{ki})\colon (\tilde{V}_k,G_k, \varphi_k)\to
(\tilde{V}_i, G_i,\varphi_i)$ and
$(\lambda_{kj}, h_{kj})\colon (\tilde{V}_k,G_k, \varphi_k)\to
(\tilde{V}_j, G_j,\varphi_j)$.
\end{enumerate}}

\item An atlas ${\mathcal U}$ is called a {\it refinement} of an atlas
${\mathcal W}$ if  every chart in ${\mathcal U}$ admits an embedding
into some chart of ${\mathcal W}$. Two orbifold atlases 
having a common refinement are called
{\it equivalent}. 
\end{enumerate}}
\end{definition}
 
 \begin{definition}
 An $n$-dimensional {\it orbifold} is a paracompact Hausdorff space $X$
 equipped with an equivalence class of $n$-dimensional orbifold atlases.
 \end{definition}

If the actions of the finite groups on the orbifold charts are effective, the
orbifold is called {\it reduced}. Reduced orbifolds are also called {\it effective}.
  
 An orbifold is  called {\it smooth}
 if each $G_i$ acts by  smooth diffeomorphisms  on 
 $\tilde{V}_i$ and if each embedding $\lambda_{ij}\colon {\tilde{V}}_i\to
 \tilde{V}_j$ is smooth. By the differentiable slice theorem we may always choose
 orbifold charts $(\tilde{V_i}, G_i, \varphi_i)$  of a smooth orbifold
 in such a way that $\tilde{V}_i$ is diffeomorphic to
 a euclidean space ${\mathbb{R}}^n$, where $n$ is the dimension of the orbifold, and
 $G_i$ acts linearly on ${\mathbb{R}}^n$. In this paper we only consider smooth orbifolds.

Let $G$ be a compact Lie group acting smoothly,  effectively and almost freely on a smooth manifold $M$. Then the quotient
space $M/G$ is a reduced smooth orbifold. 
The orbifold charts of $M/G$ are the triples $({\rm N}_x, G_x, \pi_x)$, where 
${\rm N}_x$ is a linear slice at $x\in M$ and
$\pi_x\colon {\rm N}_x\to {\rm N}_x/G_x\cong (G{\rm N}_x)/G$ denotes the natural projection.  

The {\it frame bundle} construction allows us to consider any reduced smooth orbifold as a quotient space of 
an action by a compact Lie group. Namely, by Theorem 1.23 in \cite{ALR}, there is the following result:
Let $X$ be a reduced smooth orbifold of dimension $n$. Then the frame bundle ${\rm Fr}(X)$ is a smooth manifold with
a smooth, effective and almost free action by the orthogonal group ${\rm O}(n)$. The orbifold $X$ is naturally
isomorphic to the resulting quotient orbifold ${\rm Fr}(X)/{\rm O}(n)$.

\begin{definition}
\label{orbimap}
Let $X$ and $Y$ be smooth orbifolds. A map
$f\colon X\to Y$ is called a {\it  smooth orbifold map}, 
if for every $x\in X$, there
are charts $(\tilde{U}, G,\varphi)$ around $x$ and $(\tilde{V}, H,\psi)$
around $f(x)$, such that $f$ maps $U=\varphi(\tilde{U})$ into
$V=\psi(\tilde{V})$ and the restriction $f\vert U$
can be lifted to a smooth
equivariant map
$\tilde{f}\colon\tilde{U}\to \tilde{V}$.
A  smooth orbifold map
$f\colon X\to Y$ is called an {\it orbifold diffeomorphism}, if there is a
smooth orbifold map $g\colon Y\to X$ such that $g\circ f={\rm id}_X$ and
$f\circ g={\rm id}_Y$.
\end{definition}

The {\it support} ${\rm supp}(f)$ of an orbifold diffeomorphism $f\colon X\to X$ is the closure  of the set
$$
\{ x\in X\mid f(x)\not= x\}.
$$
We give an orbifold structure to $X\times I$ as follows: 
If $(\tilde{U}_i, G_i, \varphi_i)$ is an orbifold chart of $X$, then $(\tilde{U}_i\times I, G_i, \varphi_i\times {\rm id}_I)$
is an orbifold chart of $X\times I$, where 
$G_i$ acts trivially on $I$ and diagonally on $\tilde{U}_i\times I$.
This kind of charts define an orbifold atlas for $X\times I$.

Assume then $X$ and $Y$ are smooth orbifolds and assume 
$f_0, f_1\colon X\to  Y$ are  two smooth orbifold maps. If there is a smooth orbifold
map
$$
F\colon X\times I\to Y,
$$
with $F_0=f_0$ and $F_1=f_1$, we say that $f_0$ and $f_1$ are smoothly homotopic. Here, for every
$t\in I$, $F_t\colon X\to Y$, $x\mapsto F(x,t)$. If, in addition,  $F_t$ is a smooth orbifold
diffeomorphism for every $t\in I$, we say that $f_0$ and $f_1$ are {\it smoothly isotopic}.

\section{Orbifold stratifications}
\label{strata}

\noindent Orbifolds admit stratifications  by {\it singular dimension}. This stratification is explained in detail in
\cite{D} and used in \cite{PR}: Let $X$ be a reduced smooth
$n$-dimensional orbifold, and let
$x\in X$. Let $(\tilde{V}_x, G_x, \varphi_x)$ be an  orbifold chart centered at  $x$, and let $V=\varphi_x(\tilde{V}_x)$. There is a
unique point $\tilde{x}\in \tilde{V}_x$ with $\varphi_x(\tilde{x})=x$. 
 If $(\tilde{V}'_x, G'_x, \varphi'_x)$ is another orbifold chart centered at $x$, then the groups
$G_x$ and $G'_x$ are isomorphic. The action of $G_x$ on $\tilde{V}_x$ fixes $\tilde{x}$ and the 
differential induces a linear action on
the tangent space ${\rm T}_{\tilde{x}}\tilde{V}_x$.  
Denote by  ${\rm T}_{\tilde{x}}\tilde{V}_x^{G_x}$ the subspace of ${\rm T}_{\tilde{x}}\tilde{V}_x$
consisting of the fixed points of the $G_x$-action. The
singular dimension of $x$ is then defined to be ${\rm sdim}(x)={\rm dim}({\rm T}_{\tilde{x}}\tilde{V}_x^{G_x})$, and it does not depend on the choice of the orbifold chart.
The singular set
$$
\Sigma=\{ x\in X\mid G_x\not=1\}
$$
of $X$  can be written as the union  of the {\it singular strata} $\Sigma_k$, $0\leq k\leq n-1$, where
$$
\Sigma_k=\{ x\in \Sigma\mid {\rm sdim}(x)=k\}.
$$
Similarly, for a point $x\in X$ with trivial $G_x$, the singular dimension ${\rm sdim}(x)=n$, and $\Sigma_n$ is the union of such points. 
Then $X=\bigcup_{i=0}^n\Sigma_i$.
The connected component of $\Sigma_{{\rm sdim}(x)}$ containing $x$ is 
a smooth manifold denoted by $\Sigma(x)$. The isomorphism class of the 
isotropy group $G_y$ is constant for $y\in \Sigma(x)$.  

For a finite group $H$, let 
$$
X_H=\{ x\in X\mid G_x\cong H\}.
$$
For $x\in X$, denote by $\Sigma'(x)$ the connected component of the subset $X_{G_x}$ of $X$ containing $x$.
Then $\Sigma(x)\subset \Sigma'(x)$.  
Cover $\Sigma'(x)$ by open sets $V_i$, where
$(\tilde{V}_i, G_i, \varphi_i)$ is a chart for  $V_i$ centered at some point in $\Sigma'(x)$,
$\tilde{V}_i$ is homeomorphic to ${\mathbb{R}}^n$ and $n$ is the dimension of $X$. 
The following lemma then implies that also $\Sigma'(x)\subset \Sigma(x)$, and hence
that $\Sigma(x)=\Sigma'(x)$.

\begin{lemma}
\label{samatsigmat}
Let $H$ be a finite group acting smoothly on a euclidean space ${\mathbb{R}}^n$. 
Let $F$ be
a connected component of the fixed point set of the action. Assume $x,y\in F$. 
The group $H$
acts on the tangent spaces ${\rm T}_x{\mathbb{R}}^n$ and ${\rm T}_y{\mathbb{R}}^n$ via the 
differentials.
Then $\dim({\rm T}_x{\mathbb{R}}^n)^H=\dim({\rm T}_y{\mathbb{R}}^n)^H$.
\end{lemma}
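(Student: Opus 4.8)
The plan is to show that each connected component of the fixed point set $P=\{z\in{\mathbb{R}}^n\mid hz=z\text{ for all }h\in H\}$ is a smooth submanifold whose tangent space at a point $z$ is exactly $({\rm T}_z{\mathbb{R}}^n)^H$. Granting this, $F$ is a connected smooth manifold, so for every $z\in F$ we have $\dim({\rm T}_z{\mathbb{R}}^n)^H=\dim{\rm T}_zF=\dim F$, which is constant on the connected set $F$; in particular $\dim({\rm T}_x{\mathbb{R}}^n)^H=\dim({\rm T}_y{\mathbb{R}}^n)^H$.

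First I would equip ${\mathbb{R}}^n$ with an $H$-invariant Riemannian metric, obtained by averaging any Riemannian metric over the finite group $H$. With respect to this metric each $h\in H$ acts as an isometry, and therefore the exponential map at a fixed point $z\in P$ is $H$-equivariant: since an isometry fixing $z$ commutes with $\exp_z$, we have $h\exp_z(v)=\exp_z\bigl(d_zh(v)\bigr)$ for all $h\in H$ and all $v$ in the domain of $\exp_z$, where $d_zh\colon{\rm T}_z{\mathbb{R}}^n\to{\rm T}_z{\mathbb{R}}^n$ is the differential of $h$ at $z$ (an endomorphism because $hz=z$).

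Next I would read off the fixed point set locally. Choose a neighborhood $W$ of $0$ in ${\rm T}_z{\mathbb{R}}^n$ on which $\exp_z$ is a diffeomorphism onto a neighborhood of $z$. For $v\in W$, the point $\exp_z(v)$ lies in $P$ if and only if $\exp_z\bigl(d_zh(v)\bigr)=\exp_z(v)$ for every $h\in H$; by injectivity of $\exp_z$ on $W$ this holds if and only if $d_zh(v)=v$ for every $h$, that is, $v\in({\rm T}_z{\mathbb{R}}^n)^H$. Hence $\exp_z$ restricts to a diffeomorphism from $W\cap({\rm T}_z{\mathbb{R}}^n)^H$ onto a neighborhood of $z$ in $P$. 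This exhibits $P$ as a smooth submanifold near each of its points, with ${\rm T}_zP=({\rm T}_z{\mathbb{R}}^n)^H$, completing the argument as in the first paragraph.

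I expect the only point requiring care to be the equivariance of the exponential map together with this local identification of $P$; everything else is formal. One may also note that the lemma reduces to the linear case, since in the intended application $H$ acts linearly and $P$ is then a linear subspace with $({\rm T}_z{\mathbb{R}}^n)^H=P$ for all $z\in P$; but the averaging argument handles an arbitrary smooth action directly.
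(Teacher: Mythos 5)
Your proof is correct, and it takes a genuinely different route from the paper's. You prove the stronger local statement that the fixed point set is a smooth submanifold near each of its points with tangent space $({\rm T}_z{\mathbb{R}}^n)^H$ (the standard Bochner-type argument: average the metric over the finite group, then use equivariance of the exponential map at a fixed point), after which local constancy of $\dim({\rm T}_z{\mathbb{R}}^n)^H$ along the fixed point set, and hence constancy on the connected component $F$, is immediate by invariance of domain. The paper instead invokes the fact that the fixed point set is a $\Sigma$-manifold (so that $F$ is path-connected), chooses a smooth path $f$ from $x$ to $y$, and applies a rigidity lemma of Conner--Floyd: the continuous family of representations $t\mapsto\bigl(h\mapsto dh_{f(t)}\bigr)$ into ${\rm GL}_n({\mathbb{R}})$ consists of pairwise conjugate homomorphisms over the compact interval, and conjugate representations have fixed subspaces of equal dimension. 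Your argument is more elementary and self-contained --- it in fact reproves the local manifold structure of the fixed point set that the paper's proof quietly relies on --- while the paper's argument yields the stronger conclusion that the isotropy representations at $x$ and $y$ are actually conjugate, not merely that their fixed subspaces have equal dimension. One small point to tighten: in the step ``$\exp_z\bigl(d_zh(v)\bigr)=\exp_z(v)$ implies $d_zh(v)=v$'' you need $d_zh(v)$ to lie in the injectivity neighborhood $W$; since each $d_zh$ is a linear isometry of $\bigl({\rm T}_z{\mathbb{R}}^n, g_z\bigr)$, it suffices to take $W$ to be a small metric ball, or to replace $W$ by the $H$-invariant neighborhood $\bigcap_{h\in H}d_zh(W)$.
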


\begin{proof}
Notice that the fixed point set ${\mathbb{R}}^n_H$ is a $\Sigma$-manifold. Thus the connected
components of ${\mathbb{R}}^n_H$ equal the path components.
Denote the set of group homomorphisms from $H$ to 
the general linear group ${\rm GL}_n({\mathbb{R}})$,
equipped with the compact-open topology,
by 
$$
{\rm hom}\bigl( H,{\rm GL}_n({\mathbb{R}})\bigr).
$$
Let 
$f\colon I\to F$ be a smooth path from $x$ to $y$. The map
$$
G\colon H\times I\to {\rm GL}_n({\mathbb{R}}),\,\,\, (h,t)\mapsto dh_{f(t)},
$$
is continuous.  Since $H$ is compact, $G$  induces a continuous map
$$
G^\#\colon I\to {\rm hom}\bigl( H,{\rm GL}_n({\mathbb{R}})\bigr),\,\,\, t\mapsto G_t,
$$
where $G_t(h)=G(h,t)$.
By Lemma VIII 38.1 in \cite{CF}, any group homomorphism
$H\to {\rm GL}_n({\mathbb{R}})$ that is sufficiently close to
$G_t$ must be conjugate to $G_t$. Since $I$ is compact and $G^\#$ is continuous, the homomorphisms
$G_0$ and $G_1$ are conjugate. The claim follows, since conjugate actions have isomorphic 
fixed point spaces.
\end{proof}

By using the frame bundle construction, we know that a reduced orbifold  $X$ may be considered as a quotient space $M/G$, where
$G$ is a compact Lie group acting smoothly, effectively and almost freely on a smooth manifold $M$. Then  $M$ has a natural stratification by
orbit types $M_{(H)}$, where $H$ is a finite subgroup of $G$ (see \cite{P}, Theorem 4.3.7).
If $M$ is compact, then the action has only finitely many orbit types. 
We  denote by $\tilde\Sigma(x)$ the connected component of $M_{(G_x)}$ containing $x$. This stratification of $M$ induces a stratification on the
orbit space $M/G$: If $\pi\colon M\to M/G$ is the natural projection, then the strata on $M/G$ are the images $\pi(M_{(H)})$ of the strata of $M$
(\cite{P}, 4.3.9).
Denote by $\hat{\Sigma}\bigl(\pi(x)\bigr)$ the connected component of  the stratum containing the point $\pi(x)\in M/G$, where $x\in M$. 
Let ${\rm N}$ be a linear slice at $z$, where $\pi(z)\in \Sigma'\bigl( \pi(x)\bigr)$.
Then $G{\rm N}$ is open in $M$ and for any $y\in {\rm N}$, $G_y\subset G_z$. Thus 
$\{y\in M\mid G_y\cong G_z\cong G_x\}\cap G{\rm N}\subset M_{(G_x)}$. 
Now $\pi(G{\rm N})$ is open in $M/G$, and
$\Sigma'\bigl(\pi(x)\bigr)\cap\pi(G{\rm N})=\hat{\Sigma}\bigl(\pi(x)\bigr)\cap\pi(G{\rm N})$. By covering $M/G$ by sets of the form
$\pi(G{\rm N})$, we see that in fact $\Sigma'\bigl(\pi(x)\bigr)=\hat{\Sigma}\bigl(\pi(x)\bigr)$. Thus, 
although the concept of conjugacy of the local groups of an orbifold does
not make sense, it follows that when an orbifold is considered as an orbit space, the local groups of points on a connected component of a 
stratum $\Sigma'$ are not just isomorphic but actually conjugate.

\section{Smooth maps between orbit spaces}
\label{smoothmaps}

\noindent  In \cite{S}, G. Schwarz considered smooth isotopies between orbit spaces. 
In this section we look at how his concept of smoothness is
related to smoothness in the orbifold sense.
Thus, let $G$ be a compact Lie group and let $M$ be a smooth $G$-manifold. Let $\pi\colon M\to M/G$ denote the natural projection.
We give the orbit space $M/G$ the quotient space topology and
differentiable structure as in \cite{S}: If $U$ is an open subset of $M/G$, then ${\rm C}^\infty(U)$ is the set of functions $f\colon U\to {\mathbb{R}}$
for which the composition $f\circ \pi\vert\colon \pi^{-1}(U)\to {\mathbb{R}}$ is smooth. Assume $N$ is another smooth $G$-manifold. Then a map
$\psi\colon M/G\to N/G$ is defined to be smooth, if $f\circ \psi\colon M/G\to {\mathbb{R}}$ 
is smooth for every smooth $f\colon N/G\to {\mathbb{R}}$. 
In particular, a map $M/G\to N/G$ is smooth, if it is  induced by a smooth $G$-equivariant map $M\to N$.
The notions of diffeomorphism
and isotopy have their usual categorical meaning. In \cite{S}, Schwarz proves the following isotopy lifting conjecture (\cite{S}, p. 38):

\begin{theorem}
\label{isot.lift}
Let $G$ be a compact Lie group, and let $M$ be a smooth $G$-manifold. Let $\bar{F}\colon M/G\times I\to M/G$ be a smooth isotopy starting at the identity.
Then there is a smooth $G$-equivariant isotopy $F\colon M\times I\to M$ starting at the identity and inducing $\bar{F}$.
\end{theorem}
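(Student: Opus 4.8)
The plan is to linearise the problem: replace the isotopy by its infinitesimal generator, lift the generator equivariantly to $M$, and integrate. By the differentiable structure on orbit spaces recalled above, $C^\infty(M/G)$ is, tautologically, the algebra $C^\infty(M)^G$ of $G$-invariant smooth functions: a $G$-invariant $h\in C^\infty(M)$ descends to the function $\pi(x)\mapsto h(x)$, and conversely $f\in C^\infty(M/G)$ pulls back to the $G$-invariant function $f\circ\pi$. Under this identification a smooth vector field on $M/G$ is a derivation of $C^\infty(M)^G$. Since $\bar F_0=\mathrm{id}$ and each $\bar F_t$ is a diffeomorphism of $M/G$, the family $\bar\xi_t:=\bigl(\partial_s\bar F_s\bigr)|_{s=t}\circ\bar F_t^{-1}$ is a time--dependent smooth vector field whose flow from time $0$ is $\bar F$. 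The theorem then reduces to producing a $G$-invariant vector field $\xi_t$ on $M$, smooth in $(x,t)$, that lifts $\bar\xi_t$, meaning $\xi_t(f\circ\pi)=(\bar\xi_t f)\circ\pi$ for all $f\in C^\infty(M/G)$; integrating the $\xi_t$ will then give $F$.

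The crux, and the step I expect to be the main obstacle, is this lift, which is exactly the content of Schwarz's theorem on lifting vector fields from orbit spaces \cite{S}: every derivation of $C^\infty(M)^G$ is induced by some $G$-invariant vector field on $M$. To handle the time variable in one stroke I would apply this lifting not on $M$ but on the $G$-manifold $M\times I$, with $G$ acting trivially on the second factor and orbit map $\pi\times\mathrm{id}$; here $\partial_t+\bar\xi_t$ is an honest vector field on $(M\times I)/G=M/G\times I$ generating $\bar F$. A $G$-invariant lift $\tilde X$ of it satisfies $\tilde X\bigl(t\circ(\pi\times\mathrm{id})\bigr)=1$, so its $I$-component is forced to be $\partial_t$; writing $\tilde X=\partial_t+\xi_t$ then yields a field $\xi_t$ that is automatically smooth jointly in $(x,t)$, $G$-invariant for each $t$, and $\pi$-related to $\bar\xi_t$ by construction. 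The points needing care are that Schwarz's lift can be normalised to fix the $\partial_t$ direction (immediate from the relatedness condition applied to the coordinate $t$) and that no regularity is lost along the singular strata, which is precisely what \cite{S} guarantees.

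Finally I would integrate $\tilde X=\partial_t+\xi_t$ on $M\times I$. Its flow $\Phi_s$ has the form $\Phi_s(x,0)=(F_s(x),s)$, the maps $F_s$ are $G$-equivariant because $\xi_t$ is $G$-invariant, and $F_0=\mathrm{id}$. Completeness on all of $I$ is not automatic for a lifted field, but it follows from the compactness of $G$: since $\pi$ is then a proper map and $F_s$ is $\pi$-related, one has $\pi\bigl(F_s(x)\bigr)=\bar F_s\bigl(\pi(x)\bigr)$, so $F_s(x)$ remains in $\pi^{-1}(C)$, where $C=\bar F\bigl(I\times\{\pi(x)\}\bigr)$ is compact; as $\pi^{-1}(C)$ is compact, the escape lemma extends the integral curve over the whole of $I$. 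Thus each $F_s$ is a $G$-equivariant diffeomorphism of $M$, the resulting $F\colon M\times I\to M$ is a smooth $G$-equivariant isotopy starting at the identity, and the relation $\pi\circ F_s=\bar F_s\circ\pi$ shows that $F$ induces $\bar F$, as required.
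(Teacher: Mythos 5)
The first thing to say is that the paper does not prove this statement: Theorem~\ref{isot.lift} is quoted verbatim from Schwarz \cite{S} (it is his covering isotopy theorem), so the only comparison available is with Schwarz's own argument. Your strategy --- differentiate the isotopy to a time-dependent vector field on $M/G$, lift it $G$-invariantly, and integrate, using the device of working on $M\times I$ with the field $\partial_t+\bar\xi_t$ --- is essentially the skeleton of that argument, and your completeness/properness discussion at the end is correct.

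There is, however, a genuine gap at the step you yourself call the crux. The lemma you invoke, that \emph{every} derivation of $C^\infty(M)^G$ is induced by a $G$-invariant vector field on $M$, is false. Take $G=\mathbb{Z}/2$ acting on $M=\mathbb{R}$ by $x\mapsto -x$, so that $C^\infty(M)^G$ is generated by $u=x^2$. An invariant vector field is $\xi(x)\partial_x$ with $\xi$ odd, hence $\xi(x)=x\,\eta(x^2)$ for a smooth $\eta$, and it induces the derivation $2u\,\eta(u)\,\partial_u$, whose coefficient vanishes at the singular point $u=0$; consequently $\partial_u$ is not in the image. What \cite{S} actually proves is that the liftable derivations are exactly those tangent to the orbit-type strata (equivalently, preserving their ideals). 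To make your argument work you must therefore verify that the generator $\bar\xi_t$ of a smooth isotopy satisfies this tangency condition, which in turn rests on the fact that a smooth diffeomorphism of $M/G$ in Schwarz's sense preserves the orbit-type stratification --- a nontrivial ingredient of \cite{S}, not something that can be assumed silently. Two smaller points in the same vein: defining $\bar\xi_t$ via $\bar F_t^{-1}$ uses joint smoothness of $(y,t)\mapsto\bar F_t^{-1}(y)$, which is not part of the definition of an isotopy on a singular space and needs justification; and at the end you need uniqueness of integral curves of $\bar\xi_t$ downstairs to conclude that the lifted flow induces $\bar F$ itself rather than merely some isotopy with the same generator. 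All of these are settled in \cite{S}, but as written your proposal assumes them away.
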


Let  $G$ be  a compact Lie group and let $M$ and $N$ be   smooth $G$-manifolds.
Assume the actions  are effective  and almost free so that the  quotient spaces
$M/G$  and $N/G$  are smooth orbifolds.
Let $\pi\colon M\to M/G$   and $\pi'\colon N\to N/G$ denote the natural projections.
Assume $\psi\colon M/G\to N/G$ is a smooth orbifold map.
Let $x\in M/G$. Then there 
are charts $({\rm N}_x, G_x, \pi_x)$ around $x$ and $({\rm N}_y, G_y, \pi'_y)$
around $y=\psi(x)$, such that $\psi$ maps 
$U={\rm N}_x/G_x$ into $V={\rm N}_y/G_y$
and the restriction $\psi\vert U$
can be lifted to a smooth
equivariant map $\tilde{\psi}\colon {\rm N}_x\to {\rm N}_y$.
The sets $\pi^{-1}(U)=G\times_{G_x}{\rm N}_x$ and $(\pi')^{-1}(V)=G\times_{G_y}{\rm N}_y$ are open in $M$
and $N$, respectively.
Let $f\colon V\to{\mathbb{R}}$ and assume $f\in {\rm C}^\infty(V)$. Then, by definition, the composed map
$f\circ\pi'\vert \colon G\times_{G_x}{\rm N}_y\to{\mathbb{R}}$ is smooth. But then
$$
f\circ \psi\vert_U\circ\pi_x =f\circ  \pi'_y\circ \tilde{\psi}    \colon {\rm N}_x\to{\mathbb{R}}
$$
is smooth. Consequently, $f\circ \psi\vert_U\circ \pi\vert\colon G\times_{G_x}{\rm N}_x
\to {\mathbb{R}}$ is smooth.
It follows that $\psi\vert_U$ is smooth in the sense on \cite{S}.
Since $x$ was chosen arbitrarily, it follows that also $\psi$ is smooth in the sense of \cite{S}.

We have proved the following: 

\begin{lemma}
\label{samatsileet}
Let $G$ be a  compact Lie group and let $M$ and $N$ be smooth $G$-manifolds. Assume the actions of $G$ on $M$ 
and $N$ are effective and almost free. 
Let $f\colon M/G\to N/G$ be a smooth orbifold  map.
Then $f$ is smooth in the sense of
\cite{S}.
\end{lemma}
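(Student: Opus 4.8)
The plan is to verify Schwarz-smoothness of $f$ locally, reducing it by the differentiable slice theorem to the smoothness of the equivariant lift provided by the orbifold-map hypothesis. Recall that $f\colon M/G\to N/G$ is smooth in the sense of \cite{S} precisely when $h\circ f\in{\rm C}^\infty(M/G)$ for every $h\in{\rm C}^\infty(N/G)$, i.e.\ when $h\circ f\circ\pi$ is smooth on $M$. As smoothness of a real-valued function is a local condition, it suffices to exhibit, for each $x\in M/G$, a $G$-invariant open neighborhood of $\pi^{-1}(x)$ on which $h\circ f\circ\pi$ is smooth.

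Fix $x\in M/G$ and put $y=f(x)$. First I would invoke the slice theorem to obtain slices ${\rm N}_x$ and ${\rm N}_y$ at representatives of $x$ and $y$, giving orbifold charts $({\rm N}_x,G_x,\pi_x)$ of $M/G$ and $({\rm N}_y,G_y,\pi'_y)$ of $N/G$. Shrinking ${\rm N}_x$ if necessary, the definition of a smooth orbifold map yields a smooth equivariant lift $\tilde f\colon {\rm N}_x\to{\rm N}_y$ with $\pi'_y\circ\tilde f=(f\vert_U)\circ\pi_x$, where $U={\rm N}_x/G_x$, $V={\rm N}_y/G_y$, and $f(U)\subset V$. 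Identifying $\pi^{-1}(U)=G{\rm N}_x=G\times_{G_x}{\rm N}_x$ and $(\pi')^{-1}(V)=G\times_{G_y}{\rm N}_y$, I restrict a given $h\in{\rm C}^\infty(N/G)$ to $V$; its pullback $h\circ\pi'\vert$ is smooth on $G\times_{G_y}{\rm N}_y$, so restricting to the embedded slice ${\rm N}_y$ shows that $h\circ\pi'_y\colon{\rm N}_y\to{\mathbb{R}}$ is smooth. Consequently $h\circ(f\vert_U)\circ\pi_x=h\circ\pi'_y\circ\tilde f$ is smooth on ${\rm N}_x$.

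The step I expect to carry the real content is upgrading this slicewise smoothness to smoothness of $h\circ f\circ\pi$ on the whole tube $\pi^{-1}(U)=G\times_{G_x}{\rm N}_x$. Here I would use a smooth local section $\sigma\colon O\to G$ of $G\to G/G_x$ and the associated diffeomorphism $(o,z)\mapsto\sigma(o)z$ from $O\times{\rm N}_x$ onto an open subset of $G{\rm N}_x$, together with its $G$-translates, to cover $\pi^{-1}(U)$ by charts. In such a chart $\pi(\sigma(o)z)=\pi_x(z)$, so $h\circ f\circ\pi$ reads $(o,z)\mapsto\bigl(h\circ(f\vert_U)\circ\pi_x\bigr)(z)$, which depends only on $z$ and is smooth by the previous paragraph. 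Thus $h\circ f\circ\pi$ is smooth on $\pi^{-1}(U)$.

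Since $x\in M/G$ was arbitrary and the sets $\pi^{-1}(U)$ cover $M$, the function $h\circ f\circ\pi$ is smooth on all of $M$ for every $h\in{\rm C}^\infty(N/G)$, which is exactly the Schwarz-smoothness of $f$. The only points requiring care are the identification $\pi^{-1}(U)=G\times_{G_x}{\rm N}_x$, so that the chart projection $\pi_x$ agrees with $\pi\vert$ along the slice, and the descent of smoothness from the slice to the tube via local sections; everything else is bookkeeping around the commuting relation $\pi'_y\circ\tilde f=(f\vert_U)\circ\pi_x$.
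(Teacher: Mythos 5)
Your proposal is correct and follows essentially the same route as the paper: pull back a smooth function $h$ on $N/G$ to the slice ${\rm N}_y$, use the equivariant lift $\tilde f\colon{\rm N}_x\to{\rm N}_y$ to get smoothness of $h\circ f\circ\pi_x$ on ${\rm N}_x$, and then pass from the slice to the tube $\pi^{-1}(U)=G\times_{G_x}{\rm N}_x$. The only difference is that you spell out the last step (via local sections of $G\to G/G_x$) which the paper compresses into a single ``consequently.''
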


Theorem \ref{isot.lift} and Lemma \ref{samatsileet} imply the following:

\begin{prop}
\label{ekv.nosto}
Let $X$ be a reduced smooth orbifold of dimension $n$, and let ${\rm Fr}(X)$ be the frame bundle of $X$.
Let $\bar{F}\colon X\times I\to X$ be a smooth orbifold isotopy starting at the identity. Then $\bar{F}$ is induced by a 
smooth ${\rm O}(n)$-equivariant isotopy $F\colon {\rm Fr}(X)\times I\to {\rm Fr}(X)$
starting at the identity. 
\end{prop}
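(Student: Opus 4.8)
The plan is to realize $X$ as the orbit space of the frame bundle action and then use Lemma \ref{samatsileet} as the bridge that lets us feed the orbifold isotopy $\bar{F}$ into Schwarz's isotopy lifting theorem. Write $M={\rm Fr}(X)$ and $G={\rm O}(n)$. By the frame bundle construction (Theorem 1.23 in \cite{ALR}) the group $G$ is a compact Lie group acting smoothly, effectively and almost freely on the smooth manifold $M$, and $X$ is naturally isomorphic to the quotient orbifold $M/G$. Hence it suffices to produce a smooth $G$-equivariant isotopy of $M$ starting at the identity and inducing $\bar{F}$; for this I would verify that $\bar{F}$ is a smooth isotopy of $M/G$ in the sense of \cite{S} and then invoke Theorem \ref{isot.lift}.

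First I would identify the source orbifold $X\times I$ with an orbit space. Let $G$ act on $M\times I$ by acting on the first factor and trivially on $I$; this action is again effective and almost free, and the linear slices at a point $(x,t)$ have the form ${\rm N}_x\times(\text{interval about }t)$ with isotropy $G_x$. These give precisely the product orbifold charts $(\tilde{U}_i\times I, G_i, \varphi_i\times {\rm id}_I)$ used to define the orbifold structure on $X\times I$. Thus $X\times I$ is naturally the orbit space $(M\times I)/G$, and $\bar{F}$ is a smooth orbifold map $(M\times I)/G\to M/G$. (Note that the boundary is carried entirely by the parameter factor $I$, which bears no group action, so the $G$-manifold framework of Theorem \ref{isot.lift} still applies.)

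Next I would upgrade orbifold smoothness to Schwarz smoothness. By Lemma \ref{samatsileet}, the smooth orbifold map $\bar{F}$ is smooth in the sense of \cite{S} as a map $(M\times I)/G\to M/G$. To see that $\bar{F}$ is in fact a smooth \emph{isotopy} in Schwarz's sense, I would check each time-$t$ map: since $\bar{F}$ is an orbifold isotopy, every $\bar{F}_t$ is a smooth orbifold diffeomorphism, so both $\bar{F}_t$ and its inverse are smooth orbifold maps, and applying Lemma \ref{samatsileet} to each shows $\bar{F}_t$ is a diffeomorphism of $M/G$ in the sense of \cite{S}. Together with $\bar{F}_0={\rm id}$, this makes $\bar{F}\colon M/G\times I\to M/G$ a smooth isotopy starting at the identity.

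Finally, I would apply Theorem \ref{isot.lift}: as $G$ is compact and $M$ is a smooth $G$-manifold, the smooth isotopy $\bar{F}$ lifts to a smooth $G$-equivariant isotopy $F\colon M\times I\to M$ starting at the identity and inducing $\bar{F}$. Recalling that $M={\rm Fr}(X)$ and $G={\rm O}(n)$ yields the claimed ${\rm O}(n)$-equivariant isotopy. The only genuine care needed is the bookkeeping that identifies $X\times I$ with $(M\times I)/G$ through the product orbifold charts; the substantive content is carried entirely by Lemma \ref{samatsileet} and Theorem \ref{isot.lift}.
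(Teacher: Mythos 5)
Your proposal is correct and follows exactly the route the paper takes: the paper derives Proposition \ref{ekv.nosto} immediately from Lemma \ref{samatsileet} (orbifold-smooth implies Schwarz-smooth) together with Schwarz's lifting result, Theorem \ref{isot.lift}, using the frame bundle identification $X\cong {\rm Fr}(X)/{\rm O}(n)$. You in fact supply more detail than the paper does (the identification of $X\times I$ with $({\rm Fr}(X)\times I)/{\rm O}(n)$ and the check that each $\bar{F}_t$ is a diffeomorphism in Schwarz's sense), but the substance is the same.
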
\qed

\section{Results}
\label{proofs}

\noindent  Let $G$ be a Lie group and let $H$ be a compact subgroup of $G$.  Let $M$ be a  proper
smooth $G$-manifold. Assume $M_H\not=\emptyset$. Since $G$ acts properly on $M$, it follows that
the normalizer $N(H)$  of $H$ in $G$
acts properly on $M_H$. Moreover, $N(H)_x=H$, for every $x\in M_H$. Let
$$
\phi\colon N(H)\times M_H\to M_H\times M_H,\,\,\, (k,x)\mapsto (kx,x).
$$
By Lemma \ref{G-paksu}, $N(H)/H\times M_H$ is diffeomorphic to 
$$
\phi\bigl(N(H)\times M_H\bigr)= \{ (kx,x)\mid k\in N(H), x\in M_H\}.
$$
Notice that
$$
\{(x,y)\in M_H\times M_H\mid Gx\not= Gy\}=(M_H\times M_H)\setminus 
\phi\bigl(N(H)\times M_H\bigr).
$$
Let $Y$ be a connected component of $M_H$.  Then $Y$ is a closed submanifold of
$M_H$ and the diffeomorphism $f$ of  Lemma \ref{G-paksu} takes
$N(H)/H\times Y$ onto the submanifold
$$
\{ (ky,y)\mid  k\in N(H), y\in Y\}=\phi\bigl(N(H)\times Y\bigr)
$$
of $M_H\times M_H$. Since $Y\times Y$ is a connected component of $M_H\times M_H$,
it follows that $\phi\bigl(N(H)\times Y\bigr)\cap(Y\times Y)$ is a submanifold of $Y\times Y$.

Let $N(H)_0$ denote the connected component of $N(H)$
containing the identity element. Then
$N(H)_0Y$ is connected,  which implies that $N(H)_0Y=Y$.  Thus
$\phi\bigl(N(H)_0\times Y\bigr)\subset Y\times Y$. Then
\begin{align*}
\dim\Bigl( \phi\bigl(N(H)\times Y\bigr)\cap(Y\times Y)\Bigr) &=
\dim \Bigl( \phi\bigl(N(H)_0\times Y\bigr)\cap(Y\times Y)\Bigr) \\
&= \dim \Bigl( \phi\bigl(N(H)_0\times Y\bigr)\Bigr)\\
&=\dim \Bigl( \bigl(N(H)_0H\bigr)/H\times Y\Bigr) \\
&=\dim \bigl( N(H)/H\times Y\bigr) \\
&= \dim\bigl( N(H)/H\bigr)+ \dim(Y).
\end{align*}
Notice that
$$
\{(x,y)\in Y\times Y\mid Gx\not= Gy\} =
(Y\times Y)\setminus \Bigl( \phi\bigl(N(H)\times Y\bigr)\cap(Y\times Y)\Bigr).
$$
Thus the space $\{(x,y)\in Y\times Y\mid Gx\not= Gy\}$
is connected if the codimension of $\phi\bigl(N(H)\times Y\bigr)\cap(Y\times Y)$ 
in $Y\times Y$ is at least $2$. This happens, if and only if,
$$
\dim\bigl( N(H)/H\bigr)\leq \dim(Y)-2.
$$

\medskip
\noindent{\it Proof of Theorem \ref{result1}.} We consider $n$-tuples $(x_1,\ldots, x_n)$
of points in $M$. There are 
finitely many compact subgroups $H$ of $G$ and finitely many connected components $Y_i$ of the sets $M_H$,
$1\leq i\leq k$, for some $k\leq n$, such that each of the points $x_1,\ldots, x_n$ lies in some connected
component $Y_i$, for some $i\in \{1,\ldots, k\}$.
We may assume that for all $i\not= j$ and for all $g\in G\setminus N(H)_0$, 
$Y_i\not= gY_j$. This is because for any $G$-equivariant map $f\colon M\to M$, the
condition $f(y_i)=z_i$ equals the condition $f(gy_i)=gz_i$. Thus the condition
about points in the connected components $gY_i$ can all be expressed by
considering only points in $Y_i$.
Just like in the proof of the corresponding result for orbifolds (\cite{PR}, Theorem 6) we begin by reordering
the points $x_j$ and $y_j$ according to which set $Y_i$ they lie in. Thus, up to  a permutation, we assume there exist numbers
$1=l_1< l_2\cdots < l_k\leq n$ such that for all $i\leq k-1$, $x_j\in Y_i$, if and only if $l_i\leq j\leq l_{i+1}-1$, and $x_j\in Y_k$, if and only if
$l_k\leq j\leq n$ (and analogously for the points $y_j$). 

Let  $M^n$ denote the $n$-fold cartesian product $M\times \cdots \times M$, on which $G$ 
acts diagonally. Let $M^{(n)}$ denote
the subspace of $M^n$ consisting of $n$-tuples of points whose orbits are pairwise distinct. Thus
$$
M^{(n)}=\{ (z_1,\ldots, z_n)\in M^n\mid Gz_i\not= Gz_j,\,\, {\rm if} \,\, i\not= j\}.
$$
Let $n_i=l_{i+1}-l_i$ for all $1\leq i\leq k-1$, and $n_k=n-l_k+1$. Then 
$$
(x_1,\ldots, x_n),(y_1,\ldots, y_n)\in Y_1^{n_1}\times\cdots\times Y_k^{n_k}.
$$
We show that $M^{(n)}\cap (Y_1^{n_1}\times\cdots\times Y_k^{n_k})$ is connected.  The {\it fat $G$-diagonal}  in
$Y_i^{n_i}$ is 
$$
\Delta^G_i=\{ (z_1,\ldots, z_{n_i})\in Y_i^{n_i}\mid  Gz_p= Gz_q\,\, {\rm for \,\, some}\,\, p\not= q\}.
$$
By assumption, if $Y_i$ is a connected component of $M_H$ and 
if $\dim\bigl(N(H)/H\bigr)=\dim(Y_i)-1$,
then $Y_i$  contains at most one of the points $x_1,\ldots, x_n$.
Thus, in this case,  $n_{i}=1$ and $\Delta^G_i=\emptyset$. 
(Notice that if $\dim(Y_i)=1$, then $\dim\bigl(N(H)/H\bigr)\in\{0,1\}$.
If $\dim(N(H)/H)=0$, then the equation 
$\dim\bigl(N(H)/H\bigr)=\dim(Y_i)-1$ is satisfied. 
If $\dim(N(H)/H)=1$, then $Y_i$ consists of just one orbit, and again, by assumption,
$Y_i$ can contain at most one of the points $x_1,\ldots, x_n$.)
If 
$\dim\bigl(N(H)/H\bigr)<\dim(Y_i)-1$, then $\Delta^G_i$ is a union of
submanifolds of $Y_i^{n_i}$ of codimension at least two. Since $Y_i^{n_i}$ is connected, this implies that
the complement $Y_i^{n_i}\setminus \Delta^G_i$ is connected. Similarly,
$$
M^{(n)}\cap (Y_1^{n_1}\times\cdots\times Y_k^{n_k})=
Y_1^{n_1}\times\cdots\times Y_k^{n_k}\setminus
\bigcup_{i=1}^k( Y_1^{n_1}\times\cdots\times \Delta^G_i\times\cdots
\times Y_k^{n_k})
$$
is connected. The above equality holds since we assumed $Y_i\not= gY_j$, for all $i\not= j$
and for all $g\in G\setminus N(H)_0$.

The group ${\rm Diff}^G_c(M)$ acts diagonally on $M^{(n)}$ and this action preserves each stratum
$M^{(n)}\cap (Y_1^{n_1}\times\cdots\times Y_k^{n_k})$.  We will show that each
${\rm Diff}^G_c(M)$-orbit is open in $M^{(n)}\cap (Y_1^{n_1}\times\cdots\times Y_k^{n_k})$.
Therefore, let $(z_1,\ldots, z_n)\in M^{(n)}\cap (Y_1^{n_1}\times\cdots\times Y_k^{n_k})$.
For each $z_j$, choose an open $G$-invariant neighborhood $U_j$ such that $U_i\cap U_j=\emptyset$, if $i\not= j$.
Then $U_1\times\cdots \times U_n\subset M^{(n)}$. 
By Proposition \ref{apulause}, each $z_i$ has a neighborhood
$V_i$ in $Y_i$ with the property 
that $GV_i\subset U_i$, and for each $w_i\in V_i$ there exists $f_i\in
{\rm Diff}^G_c(M)$ such that the $G$-compact support of the isotopy from the
identity to $f_i$ is contained in $U_i$
and $f_i(z_i)=w_i$. Let $g=f_1\circ\cdots\circ   f_n$. 
Since ${\rm supp}(f_i)\cap {\rm supp}(f_j)=\emptyset$, for all $i\not= j$, it follows that
$g(z_i)=w_i$, for all $1\leq i\leq n$. 
This means that the ${\rm Diff}^G_c(M)$-orbit  of
$(z_1,\ldots, z_n)$ is open in $M^{(n)}\cap (Y_1^{n_1}\times\cdots\times Y_k^{n_k})$.  Since
$M^{(n)}\cap (Y_1^{n_1}\times\cdots\times Y_k^{n_k})$ is connected, there is exactly one orbit. Thus there exists
$f\in {\rm Diff}^G_c(M)$ such that $f(x_1,\ldots, x_n)=(y_1,\ldots, y_n)$.
\qed

\begin{cor}
\label{cor1}
Let $G$ be a Lie group and let $M$ be a proper smooth $G$-manifold. 
Let
$n\in{\mathbb{N}}$, and let $(x_1,\ldots, x_n)$ and $(y_1,\ldots, y_n)$ be two $n$-tuples of points in
$M$. 
Assume the Conditions $(1)-(3)$ in Theorem \ref{result1} are satisfied.
Then there exists a diffeomorphism $f\colon M/G\to M/G$
isotopic to the identity through a compactly supported isotopy, such that $f(Gx_i)=Gy_i$ for all $1\leq i\leq n$.
\end{cor}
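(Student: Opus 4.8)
The plan is to descend the equivariant data furnished by Theorem~\ref{result1} to the orbit space, so that essentially all the work is already done and only a formal transfer remains. First I would apply Theorem~\ref{result1} to the two given $n$-tuples: since Conditions $(1)$–$(3)$ are assumed, there is a smooth $G$-equivariant diffeomorphism $F\colon M\to M$ with $F(x_i)=y_i$ for all $i$, together with a $G$-equivariant, $G$-compactly supported isotopy $\mathcal{F}\colon M\times I\to M$ from the identity to $F$; thus $F\in{\rm Diff}^G_c(M)$, with $\mathcal{F}_0={\rm id}$ and $\mathcal{F}_1=F$. I would equip $M/G$ with the differentiable structure of \cite{S}, declaring $g\colon U\to{\mathbb{R}}$ smooth exactly when $g\circ\pi$ is smooth on $\pi^{-1}(U)$, where $\pi\colon M\to M/G$ is the natural projection; this definition is meaningful for any proper $G$-action.

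Because $F$ is $G$-equivariant it maps orbits to orbits, $F(Gx)=GF(x)$, and hence descends to a well-defined map $\bar F\colon M/G\to M/G$ characterized by $\bar F\circ\pi=\pi\circ F$. In particular $\bar F(Gx_i)=GF(x_i)=Gy_i$. To see that $\bar F$ is smooth in the sense of \cite{S}, I would take any smooth $g\colon M/G\to{\mathbb{R}}$, so that $g\circ\pi$ is smooth, and note that $(g\circ\bar F)\circ\pi=g\circ(\bar F\circ\pi)=(g\circ\pi)\circ F$ is smooth as the composition of the smooth map $g\circ\pi$ with the diffeomorphism $F$; this is precisely the condition for $\bar F$ to be smooth. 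Since $F^{-1}$ is again a smooth $G$-equivariant diffeomorphism, applying the same reasoning to $F^{-1}$ produces a smooth map inducing a two-sided inverse of $\bar F$, so $\bar F$ is a diffeomorphism of $M/G$.

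Next I would descend the isotopy. Regarding $\mathcal{F}$ as a smooth $G$-equivariant map $M\times I\to M$, with $G$ acting trivially on $I$, and using the identification $(M\times I)/G=(M/G)\times I$ (the orbit of $(x,t)$ is $Gx\times\{t\}$, and the two differentiable structures agree since both are defined by pullback along $\pi\times{\rm id}$), the same descent yields $\bar{\mathcal{F}}\colon (M/G)\times I\to M/G$ with $\bar{\mathcal{F}}\circ(\pi\times{\rm id})=\pi\circ\mathcal{F}$; smoothness of $\bar{\mathcal{F}}$ and of each $\bar{\mathcal{F}}_t$, together with $\bar{\mathcal{F}}_0={\rm id}$ and $\bar{\mathcal{F}}_1=\bar F$, follows exactly as above. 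For the support I would use that ${\rm supp}(\mathcal{F})$ is $G$-invariant, so $\pi^{-1}(\pi({\rm supp}(\mathcal{F})))={\rm supp}(\mathcal{F})$; hence $Gx\notin\pi({\rm supp}(\mathcal{F}))$ forces $x\notin{\rm supp}(\mathcal{F})$ and therefore $\bar{\mathcal{F}}(Gx,t)=G\mathcal{F}_t(x)=Gx$ for all $t$. Thus ${\rm supp}(\bar{\mathcal{F}})\subset\pi({\rm supp}(\mathcal{F}))$, which is compact because $\mathcal{F}$ is $G$-compactly supported. This exhibits $\bar F$ as a diffeomorphism of $M/G$, isotopic to the identity through the compactly supported isotopy $\bar{\mathcal{F}}$, with $\bar F(Gx_i)=Gy_i$, proving the corollary.

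I expect the only genuine point to be verifying that the descent is smooth in the adopted sense for orbit spaces rather than merely continuous; the identity $(g\circ\bar F)\circ\pi=(g\circ\pi)\circ F$ is the crux, and one must confirm that the differentiable structure on $(M/G)\times I$ coincides with that of $(M\times I)/G$ so the isotopy stays in the same smooth category. A secondary subtlety is that \cite{S} is phrased for compact $G$, while the corollary allows any Lie group acting properly; however, the descent argument uses only the pullback definition of smoothness, which is formal and requires no invariant-theoretic input, so no compactness of $G$ is needed. Everything else — the orbit-to-orbit correspondence, the bijectivity of $\bar F$, and the containment of supports — is routine once the equivariant diffeomorphism and isotopy of Theorem~\ref{result1} are in hand.
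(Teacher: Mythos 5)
Your proposal is correct and follows essentially the same route as the paper: apply Theorem \ref{result1} to obtain a $G$-equivariant $G$-compactly supported isotopy, then descend it to $M/G$, using the pullback definition of smoothness from \cite{S} (under which any map induced by a smooth equivariant map is automatically smooth). The paper's proof is just a terser version of this; your extra verifications of the inverse, the identification $(M\times I)/G=(M/G)\times I$, and the support containment are correct fillings-in of details the paper leaves implicit.
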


\begin{proof}
Since the $n$-tuples $(x_1,\ldots, x_n)$ and $(y_1,\ldots, y_n)$ satisfy the conditions of Theorem \ref{result1}, there is 
an equivariant $G$-compactly supported isotopy $F\colon M\times I\to M$ with $F_0={\rm id}\colon M\to M$ and
$F_1(x_i)=y_i$, for all $i\in\{1,\ldots, n\}$.
Then $F$ induces a compactly supported isotopy $\tilde{F}\colon (M/G)\times I\to M/G$, with $\tilde{F}_0=
{\rm id}\colon M/G\to M/G$ and $\tilde{F}_1(Gx_i)=Gy_i$ for all $i\in \{1,\ldots, n\}$. The isotopy $\tilde{F}$ is
smooth in the sense of \cite{S}, since it is induced by the equivariant isotopy $F$.
\end{proof}

\noindent{\it Proof of Theorem \ref{result2}.} Let $X$ be a reduced smooth  orbifold.
Using the frame bundle construction, we  identify $X$ with an orbit space $M/G$, where $G$ is a compact Lie
group acting smoothly on $M$ by an effective, almost free action.  Let $\pi\colon M\to M/G$ denote the natural
projection. For every $i\in \{1,\ldots, n\}$ there exists $\tilde{x}_i\in M$ with $\pi(\tilde{x}_i)=x_i$. Denote the
isotropy subgroup of $\tilde{x}_i$ by $H_i$, for every $i$. Let $Y_i$ be the connected component of $M_{H_i}$
containing $\tilde{x}_i$. Since $\Sigma(x_i)=\hat{\Sigma}(x_i)$ (see Section \ref{strata}), and by Lemma 
\ref{MHconncomp}, we may choose points $\tilde{y}_i\in M$ with $\pi(\tilde{y}_i)=y_i$ in such a way that 
$\tilde{y}_i\in Y_i$, for each $i$.

We now have two $n$-tuples $(\tilde{x}_1,\ldots, \tilde{x}_n)$ and $(\tilde{y}_1,\ldots, \tilde{y}_n)$ in $M$.
We will show that these $n$-tuples satisfy the Conditions $(1)-(3)$ in Theorem \ref{result1}. Condition $(1)$
holds by assumption, and Condition $(2)$ holds by the way the points $\tilde{y}_i$ were chosen. 

It remains to verify that Condition $(3)$ holds. Let $x\in M_H$, and let $Y$ be the connected component
of $M_H$ containing $x$. Then the normalizer $N(H)$ acts properly on $N(H)Y$. Let ${\rm N}$ be a linear slice
at $x$ in $N(H)Y$. Then
$$
{\rm T}_x\bigl(N(H)Y\bigr)\cong {\rm T}_{eH}\bigl( N(H)/H\bigr)\oplus {\rm T}_x({\rm N}).
$$
Since $N(H)_0Y=Y$, and since $N(H)Y$ is a union of some connected components of $M_H$, it follows that
\begin{align*}
\dim(Y)&=\dim\bigl(N(H)_0Y\bigr)\\
&=\dim\bigl(N(H)/H\bigr)+\dim({\rm N})\\
&>\dim\bigl(N(H)/H\bigr)+1,
\end{align*}
when $\dim({\rm N})>1$. Assume $\dim({\rm N})=1$. Then
$\dim\bigl(\pi(Y)\bigr)=1$ and, by assumption, $\pi(Y)$ can contain at most one of the
points $x_i$. Consequently, $Y$ can contain at most one of the points $\tilde{x}_i$. Thus Condition $(3)$ is
satisfied. The claim of the theorem can now be proved the same way Corollary \ref{cor1} was proved. 
\qed

\end{document}